\providecommand{\U}[1]{\protect\rule{.1in}{.1in}}
\providecommand{\U}[1]{\protect\rule{.1in}{.1in}}
\newtheorem{theorem}{Theorem}
\newtheorem{lemma}[theorem]{Lemma}
\newtheorem{proposition}[theorem]{Proposition}
\newtheorem{remark}[theorem]{Remark}
\begin{document}
\title{Decompositions of Rational Gabor Representations}
\author{Vignon Oussa}
\address{Dept.\ of Mathematics \\
Bridgewater State University\\
Bridgewater, MA 02325 U.S.A.\\}
\date{January $2015$}
\maketitle

\begin{abstract}
Let $\Gamma=\left\langle T_{k},M_{l}:k\in\mathbb{Z}^{d},l\in B\mathbb{Z}^{d}\right\rangle $ be a group of unitary operators where $T_{k}$ is a
translation operator and $M_{l}$ is a modulation operator acting on
$L^{2}\left(  \mathbb{R}^{d}\right) .$ Assuming that $B$ is a non-singular
rational matrix of order $d$ with at least one entry which is not an integer,
we obtain a direct integral irreducible decomposition of the Gabor
representation which is defined by the isomorphism $\pi:\left(  \mathbb{Z}_{m}\times B\mathbb{Z}^{d}\right)  \rtimes\mathbb{Z}^{d}\rightarrow\Gamma$
where $\pi\left(  \theta,l,k\right)  =e^{\frac{2\pi i}{m}\theta}M_{l}T_{k}.$
We also show that the left regular representation of $\left(  \mathbb{Z}_{m}\times B\mathbb{Z}^{d}\right)  \rtimes\mathbb{Z}^{d}$ which is identified
with $\Gamma$ via $\pi$ is unitarily equivalent to a direct sum of
$\mathrm{card}\left(  \left[  \Gamma,\Gamma\right]  \right)  $ many disjoint
subrepresentations of the type: $L_{0},L_{1},\cdots,L_{\mathrm{card}\left(
\left[  \Gamma,\Gamma\right]  \right)  -1}\ $such that for $k\neq1$ the
subrepresentation $L_{k}$ of the left regular representation is disjoint from
the Gabor representation. Additionally, we compute the central decompositions of the representations $\pi$ and $L_1.$ These decompositions are then exploited to give a new proof of the Density Condition of Gabor systems (for the rational case). More precisely, we prove that $\pi$ is equivalent to a subrepresentation of $L_1$ if and only if $|\det B|\leq 1.$ We also derive characteristics of vectors $f$ in $L^{2}(\mathbb{R})^{d}$ such that $\pi(\Gamma)f$ is a Parseval frame in
$L^{2}(\mathbb{R})^{d}.$ 

\end{abstract}


\section{Introduction}

The concept of applying tools of abstract harmonic analysis to time-frequency
analysis, and wavelet theory is not a new idea \cite{Baggett, bag1, bag2,
hartmut, moussa}. For example in \cite{Baggett}, Larry Baggett gives a direct
integral decomposition of the Stone-von Neumann representation of the discrete
Heisenberg group acting in $L^{2}(\mathbb{R}).$ Using his decomposition, he
was able to provide specific conditions under which this representation is
cyclic. In Section $5.5,$ \cite{hartmut} the author obtains a characterization
of tight Weyl-Heisenberg frames in $L^{2}\left(\mathbb{R}\right)$ using
the Zak transform and a precise computation of the Plancherel measure of a
discrete type $I$ group. In \cite{moussa}, the authors present a thorough
study of the left regular representations of various subgroups of the reduced
Heisenberg groups. Using well-known results of admissibility of unitary
representations of locally compact groups, they were able to offer new
insights on Gabor theory.

Let $B$ be a non-singular matrix of order $d$ with real entries. For each
$k\in\mathbb{Z}^{d}$ and $l\in B\mathbb{Z}^{d},$ we define the corresponding
unitary operators $T_{k},M_{l}$ such that $T_{k}f\left(  t\right)  =f\left(
t-k\right)  \text{ and }M_{l}f\left(  t\right)  =e^{-2\pi i\left\langle
l,t\right\rangle }f\left(  t\right) $ for $f\in L^{2}\left(  \mathbb{R}^{d}\right)  .$ The operator $T_{k}$ is called a shift operator, and the
operator $M_{l}$ is called a modulation operator. Let $\Gamma$ be a subgroup
of the group of unitary operators acting on $L^{2}\left(  \mathbb{R}^{d}\right)  $ which is generated by the set $\left\{  T_{k},M_{l}:k\in\mathbb{Z}^{d},l\in B\mathbb{Z}^{d}\right\}  .$ We write $\Gamma
=\left\langle T_{k},M_{l}:k\in\mathbb{Z}^{d},l\in B\mathbb{Z}^{d}\right\rangle
.$ The commutator subgroup of $\Gamma$ given by $\left[  \Gamma,\Gamma\right]
=\left\{e^{2\pi i\left\langle l,Bk\right\rangle }:l,k\in\mathbb{Z}^{d}\right\}$ is a subgroup of the one-dimensional torus $\mathbb{T}$. Since
$\left[\Gamma,\Gamma\right]$ is always contained in the center of the group, then
$\Gamma$ is a nilpotent group which is generated by $2d$ elements. Moreover, $\Gamma$ is given the discrete topology, and as such it is a locally compact group. We observe that if $B$ has at least one irrational entry, then it is a non-abelian group with an infinite center. If $B$ only has rational entries with at least one entry which is not an integer, then $[\Gamma,\Gamma]$ is a finite group, and $\Gamma$ is a non-abelian group which is regarded as a finite extension of an abelian group. If all entries of $B$ are integers, then $\Gamma$ is abelian, and clearly $[\Gamma,\Gamma]$ is trivial. Finally, it is worth mentioning that $\Gamma$ is a type $I$ group if and only if $B$ only has rational entries \cite{Thoma}.

It is easily derived from the work in Section $4$, \cite{moussa} that if $B$
is an integral matrix, then the Gabor representation $\pi:B\mathbb{Z}%
^{d}\times\mathbb{Z}^{d}\rightarrow\Gamma\subset\mathcal{U} \left(
L^{2}\left( \mathbb{R}^{d}\right)  \right) $ defined by $\pi\left(
l,k\right)  =M_{l}T_{k}$ is equivalent to a subrepresentation of the left
regular representation of $\Gamma$ if and only if $B$ is a unimodular matrix.
The techniques used by the authors of \cite{moussa} rely on the decompositions
of the left regular representation and the Gabor representation into their
irreducible components. The group generated by the operators $M_{l}$ and
$T_{k}$ is a commutative group which is isomorphic to $\mathbb{Z}_{d}\times
B\mathbb{Z}_{d}.$ The unitary dual and the Plancherel measure for discrete
abelian groups are well-known and rather easy to write down. Thus, a precise
direct integral decomposition of the left regular representation is easily
obtained as well. Next, using the Zak transform, the authors decompose the
representation $\pi$ into a direct integral of its irreducible components.
They are then able to compare both representations. As a result, one can
derive from the work in the fourth section of \cite{moussa} that the
representation $\pi$ is equivalent to a subrepresentation of the left regular
representation if and only if $B$ is a unimodular matrix. The main objective
of this paper is to generalize these ideas to the more difficult case where
$B\in GL\left(  d,\mathbb{Q}\right)$ and $\Gamma$ is not a commutative group.

Let us assume that $B$ is an invertible rational matrix with at least one
entry which is not an element of $\mathbb{Z}.$ Denoting the inverse transpose
of a given matrix $M$ by $M^{\star},$ it is not too hard to see that there
exists a matrix $A\in GL\left(  d,\mathbb{Z}\right)  $ such that
$\Lambda=B^{\star}\mathbb{Z}^{d}\cap\mathbb{Z}^{d}=A\mathbb{Z}^{d}.$ Indeed, a
precise algorithm for the construction of $A$ is described on Page $809$ of
\cite{Unified}. Put
\begin{equation}
\Gamma_{0}=\left\langle \tau,M_{l},T_{k}:l\in B\mathbb{Z}^{d},k\in\Lambda
,\tau\in\left[  \Gamma,\Gamma\right]  \text{ }\right\rangle \label{Gamma0}%
\end{equation}
and define $\Gamma_{1}=\left\langle \tau,M_{l}:l\in B\mathbb{Z}^{d},\tau
\in\left[  \Gamma,\Gamma\right]  \text{ }\right\rangle .$ Then $\Gamma_{0}$ is
a normal abelian subgroup of $\Gamma.$ Moreover, we observe that $\Gamma_{1}$
is a subgroup of $\Gamma_{0}$ of infinite index. Let $m$ be the number of
elements in $\left[  \Gamma,\Gamma\right]  .$ Clearly, since $B$ has at least
one rational entry which is not an integer, it must be the case that $m>1$.
Furthermore, it is easy to see that there is an isomorphism $\pi:\left(
\mathbb{Z}_{m}\times B\mathbb{Z}^{d}\right)  \rtimes\mathbb{Z}^{d}%
\rightarrow\Gamma\subset\mathcal{U}\left(  L^{2}\left(  \mathbb{R}^{d}\right)
\right) $ defined by $\pi\left(  j,Bl,k\right)  =e^{\frac{2\pi ji}{m}}%
M_{Bl}T_{k}$. The multiplication law on the semi-direct product group $\left(
\mathbb{Z}_{m}\times B\mathbb{Z}^{d}\right)  \rtimes\mathbb{Z}^{d}$ is
described as follows. Given arbitrary elements
\[
\left(  j,Bl,k\right)  ,\left(  j_{1},Bl_{1},k_{1}\right)  \in\left(
\mathbb{Z}_{m}\times B\mathbb{Z}^{d}\right)  \rtimes\mathbb{Z}^{d},
\]
we define $\left(  j,Bl,k\right)  \left(  j_{1},Bl_{1},k_{1}\right)  =\left(
\left(  j+j_{1}+\omega\left(  l_{1},k\right)  \right)  \operatorname{mod}%
m,B\left(  l+l_{1}\right)  ,k+k_{1}\right) $ where $\omega\left(
l_{1},k\right)  \in\mathbb{Z}_{m}$, and $\left\langle Bl_{1},k\right\rangle
=\frac{\omega\left(  l_{1},k\right)  }{m}.\ $We call $\pi$ a rational Gabor
representation. It is also worth observing that $\pi^{-1}\left(  \Gamma
_{0}\right)  =\left(  \mathbb{Z}_{m}\times B\mathbb{Z}^{d}\right)  \times
A\mathbb{Z}^{d}$ and $\pi^{-1}\left(  \Gamma_{1}\right)  =\left(
\mathbb{Z}_{m}\times B\mathbb{Z}^{d}\right)  \times\left\{  0\right\}
\simeq\mathbb{Z}_{m}\times B\mathbb{Z}^{d}.$ Throughout this work, in order to
avoid cluster of notations, we will make no distinction between $\left(
\mathbb{Z}_{m}\times B\mathbb{Z}^{d}\right)  \rtimes\mathbb{Z}^{d}$ and
$\Gamma$ and their corresponding subgroups.


The main results of this paper are summarized in the following propositions.
Let
\[
\Gamma=\left\langle T_{k},M_{l}:k\in\mathbb{Z}^{d},l\in B\mathbb{Z}%
^{d}\right\rangle
\]
and assume that $B$ is an invertible rational matrix with at least one entry
which is not an integer. Let $L$ be the left regular representation of
$\Gamma.$

\begin{proposition}
\label{L}The left regular representation of $\Gamma$ is decomposed as
follows:
\begin{equation}
L\simeq\oplus_{k=0}^{m-1}\int_{\frac{\mathbb{R}^{d}}{B^{\star}\mathbb{Z}^{d}%
}\times\frac{\mathbb{R}^{d}}{A^{\star}\mathbb{Z}^{d}}}^{\oplus}\mathrm{Ind}%
_{\Gamma_{0}}^{\Gamma}\chi_{\left(  k,\sigma\right)  }\text{ }d\sigma.
\label{decomp}%
\end{equation}
Moreover, the measure $d\sigma$ in (\ref{decomp}) is a Lebesgue measure, and
(\ref{decomp}) is not an irreducible decomposition of $L.$
\end{proposition}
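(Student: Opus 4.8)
The plan is to obtain (\ref{decomp}) by combining two classical facts: that induction in stages carries the regular representation of a subgroup to the regular representation of the ambient group, and that the regular representation of a discrete abelian group is the direct integral of its characters against Haar measure on the dual.

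First I would recall from the set-up that $\Gamma_0$ (the subgroup $(\mathbb{Z}_m\times B\mathbb{Z}^d)\times A\mathbb{Z}^d$ under $\pi$) is an abelian normal subgroup of $\Gamma$ with $\Gamma/\Gamma_0\cong\mathbb{Z}^d/A\mathbb{Z}^d$, a finite group of order $|\det A|$; since $\Gamma$ is non-abelian while $\Gamma_0$ is abelian we have $\Gamma_0\subsetneq\Gamma$, so $|\det A|>1$. The first ingredient is the standard identity $L\cong\mathrm{Ind}_{\Gamma_0}^{\Gamma}L_{\Gamma_0}$, which for a discrete group is just the orthogonal decomposition $\ell^2(\Gamma)=\bigoplus_{g\Gamma_0\in\Gamma/\Gamma_0}\ell^2(g\Gamma_0)$ rewritten as an induced representation. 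The second ingredient is the Fourier decomposition of the regular representation of the discrete abelian group $\Gamma_0$, namely $L_{\Gamma_0}\cong\int_{\widehat{\Gamma_0}}^{\oplus}\chi\,d\mu(\chi)$ with $\mu$ the Plancherel (Haar) measure on $\widehat{\Gamma_0}$. I would then identify this dual explicitly: a character of $\mathbb{Z}_m$ has the form $j\mapsto e^{2\pi ijk/m}$ for a unique $k\in\{0,\dots,m-1\}$; a character of $B\mathbb{Z}^d$ has the form $Bl\mapsto e^{2\pi i\langle\xi,Bl\rangle}$ and $\xi,\xi'$ give the same character iff $\xi-\xi'\in B^\star\mathbb{Z}^d$, so $\widehat{B\mathbb{Z}^d}\cong\mathbb{R}^d/B^\star\mathbb{Z}^d$; likewise $\widehat{A\mathbb{Z}^d}\cong\mathbb{R}^d/A^\star\mathbb{Z}^d$. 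Hence $\widehat{\Gamma_0}\cong\mathbb{Z}_m\times(\mathbb{R}^d/B^\star\mathbb{Z}^d)\times(\mathbb{R}^d/A^\star\mathbb{Z}^d)$ and $\mu$ is the product of counting measure on $\mathbb{Z}_m$ with the Haar measure $d\sigma$ on $(\mathbb{R}^d/B^\star\mathbb{Z}^d)\times(\mathbb{R}^d/A^\star\mathbb{Z}^d)$; the latter is Lebesgue measure on a fundamental domain, which is the "$d\sigma$ is a Lebesgue measure" assertion.

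Now I would invoke the compatibility of induction with direct integrals, $\mathrm{Ind}_{\Gamma_0}^{\Gamma}\left(\int_{\widehat{\Gamma_0}}^{\oplus}\chi\,d\mu\right)\cong\int_{\widehat{\Gamma_0}}^{\oplus}\mathrm{Ind}_{\Gamma_0}^{\Gamma}\chi\,d\mu$, and combine it with the two ingredients above to get $L\cong\int_{\widehat{\Gamma_0}}^{\oplus}\mathrm{Ind}_{\Gamma_0}^{\Gamma}\chi\,d\mu(\chi)$. Peeling off the discrete $\mathbb{Z}_m$ coordinate as a finite direct sum over $k=0,\dots,m-1$ and writing a generic character as $\chi_{(k,\sigma)}$ with $\sigma\in(\mathbb{R}^d/B^\star\mathbb{Z}^d)\times(\mathbb{R}^d/A^\star\mathbb{Z}^d)$ gives precisely (\ref{decomp}).

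For the final assertion, that (\ref{decomp}) is not an irreducible decomposition, it suffices to exhibit one reducible fiber. Taking $k=0$ and $\sigma=0$ gives $\chi_{(0,0)}=\mathbf 1_{\Gamma_0}$, and $\mathrm{Ind}_{\Gamma_0}^{\Gamma}\chi_{(0,0)}$ is then the quasi-regular representation of $\Gamma$ on $\ell^2(\Gamma/\Gamma_0)$, a space of dimension $[\Gamma:\Gamma_0]=|\det A|>1$ in which the constant functions span a $\Gamma$-invariant line; hence this fiber is reducible. (More generally every $\chi_{(0,\sigma)}$ is trivial on $[\Gamma,\Gamma]$, hence fixed by the conjugation action of $\Gamma$ on $\widehat{\Gamma_0}$ and extendible to a character $\widetilde\chi$ of $\Gamma$ since $\mathbb{T}$ is divisible, so $\mathrm{Ind}_{\Gamma_0}^{\Gamma}\chi_{(0,\sigma)}\cong\widetilde\chi\otimes\ell^2(\Gamma/\Gamma_0)$ breaks into $|\det A|$ one-dimensional pieces.) No single step here is deep; the part that needs care is the bookkeeping — choosing fundamental domains for $\mathbb{R}^d/B^\star\mathbb{Z}^d$ and $\mathbb{R}^d/A^\star\mathbb{Z}^d$ and fixing the normalization of $d\sigma$ consistently, and checking that $\sigma\mapsto\mathrm{Ind}_{\Gamma_0}^{\Gamma}\chi_{(k,\sigma)}$ is a measurable field so that the direct integral is legitimate.
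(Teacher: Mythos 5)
Your proposal is correct and follows essentially the same route as the paper: write $L\simeq\mathrm{Ind}_{\Gamma_0}^{\Gamma}(L_{\Gamma_0})$ (the paper phrases this via $\mathrm{Ind}_{\{e\}}^{\Gamma}(1)$ and induction in stages), decompose $L_{\Gamma_0}$ over $\widehat{\Gamma}_0\simeq\mathbb{Z}_m\times\frac{\mathbb{R}^d}{B^\star\mathbb{Z}^d}\times\frac{\mathbb{R}^d}{A^\star\mathbb{Z}^d}$ with Plancherel (Lebesgue) measure, and commute induction with the direct integral. Your explicit reducible fiber at $(k,\sigma)=(0,0)$ is a valid and slightly more concrete justification of the final claim than the paper's, which simply observes that $\Gamma_{\chi_{(0,\sigma)}}=\Gamma\neq\Gamma_0$.
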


\begin{proposition}
\label{operator}The Gabor representation $\pi$ is decomposed as follows:
\begin{equation}
\pi\simeq\int_{\frac{\mathbb{R}^{d}}{\mathbb{Z}^{d}}\times\frac{\mathbb{R}%
^{d}}{A^{\star}\mathbb{Z}^{d}}}^{\oplus}\mathrm{Ind}_{\Gamma_{0}}^{\Gamma}%
\chi_{\left(  1,\sigma\right)  }\text{ }d\sigma. \label{comp}%
\end{equation}
Moreover, $d\sigma$ is a Lebesgue measure defined on the torus $\frac
{\mathbb{R}^{d}}{\mathbb{Z}^{d}}\times\frac{\mathbb{R}^{d}}{A^{\star
}\mathbb{Z}^{d}}$ and (\ref{comp}) is an irreducible decomposition of $\pi.$
\end{proposition}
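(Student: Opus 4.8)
The plan is to realize $\pi$ concretely via the Zak transform and then organize the resulting fiber decomposition as an induced representation from $\Gamma_0$. First I would recall that the group $\Gamma_0 = (\mathbb{Z}_m \times B\mathbb{Z}^d) \times A\mathbb{Z}^d$ is a normal abelian subgroup of $\Gamma$ with $\Gamma/\Gamma_0 \cong \mathbb{Z}^d/A\mathbb{Z}^d$ a finite group; this is the structural fact that makes Mackey machine available and forces every irreducible of $\Gamma$ appearing in $\pi$ to be induced from a character of $\Gamma_0$. Since the center $[\Gamma,\Gamma]$ acts in $\pi$ by the embedding $e^{2\pi i j/m}\mapsto e^{2\pi i j/m}\mathrm{Id}$, i.e.\ by the nontrivial character indexed by $k=1$ (in the notation of Proposition \ref{L}), only the characters $\chi_{(1,\sigma)}$ of $\Gamma_0$ whose restriction to $[\Gamma,\Gamma]$ is this fixed character can occur — which is exactly why the index is frozen at $1$ in \eqref{comp}, in contrast to the sum over $k=0,\dots,m-1$ in \eqref{decomp}.

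The key computational step is a Zak-transform diagonalization. I would introduce the Zak transform $Z$ adapted to the lattice $\mathbb{Z}^d$ (or rather to $A\mathbb{Z}^d$, to match $\Gamma_0$), mapping $L^2(\mathbb{R}^d)$ unitarily onto $L^2$ of the fundamental domain $\frac{\mathbb{R}^d}{\mathbb{Z}^d}\times\frac{\mathbb{R}^d}{A^\star\mathbb{Z}^d}$ (with values in an auxiliary finite-dimensional space coming from the finite index $[\mathbb{Z}^d : A\mathbb{Z}^d] = |\det A|$, or from $[B^\star\mathbb{Z}^d : A\mathbb{Z}^d]$). Under $Z$, the translations $T_k$ for $k\in A\mathbb{Z}^d$ and the modulations $M_l$ for $l\in B\mathbb{Z}^d$ become multiplication operators by the appropriate characters, so $\pi|_{\Gamma_0}$ becomes a direct integral of characters $\chi_{(1,\sigma)}$ over $(\sigma)$ ranging in the stated torus with Lebesgue measure, each with the multiplicity recorded by the fiber dimension. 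The remaining generators, namely $T_k$ for $k$ running over coset representatives of $A\mathbb{Z}^d$ in $\mathbb{Z}^d$, act by shuffling the fibers $\chi_{(1,\sigma)}$ among themselves exactly according to the coadjoint action of $\Gamma/\Gamma_0$ on the dual of $\Gamma_0$; recognizing this shuffling as the defining pattern of $\mathrm{Ind}_{\Gamma_0}^\Gamma$ is what converts the direct integral of characters into the direct integral $\int^\oplus \mathrm{Ind}_{\Gamma_0}^\Gamma \chi_{(1,\sigma)}\,d\sigma$ of \eqref{comp}.

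Finally I would check irreducibility of the decomposition, i.e.\ that each $\mathrm{Ind}_{\Gamma_0}^\Gamma \chi_{(1,\sigma)}$ is irreducible and that distinct $\sigma$ (in the chosen fundamental domain, which is a fundamental domain for the $\Gamma/\Gamma_0$-action precisely because we use $\frac{\mathbb{R}^d}{\mathbb{Z}^d}$ rather than $\frac{\mathbb{R}^d}{B^\star\mathbb{Z}^d}$) give inequivalent representations. Irreducibility follows from Mackey's criterion: since $\Gamma_0$ is abelian and normal with finite quotient, $\mathrm{Ind}_{\Gamma_0}^\Gamma\chi$ is irreducible iff the stabilizer of $\chi$ in $\Gamma$ equals $\Gamma_0$, and for $\chi$ nontrivial on $[\Gamma,\Gamma]$ (our case, since the first coordinate is $1$) the commutation relations $M_l T_k = e^{-2\pi i\langle l,k\rangle} T_k M_l$ show the stabilizer cannot be larger than $\Gamma_0$ for generic — in fact all — $\sigma$. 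Inequivalence for distinct $\sigma$ is the statement that the $\Gamma/\Gamma_0$-orbits in the torus are parametrized by the fundamental domain, together with the fact that $\mathrm{Ind}$ of characters on distinct orbits are disjoint. The main obstacle I anticipate is purely bookkeeping rather than conceptual: pinning down the correct fundamental domain and the matrix $A$ so that the torus in \eqref{comp} is genuinely $\frac{\mathbb{R}^d}{\mathbb{Z}^d}\times\frac{\mathbb{R}^d}{A^\star\mathbb{Z}^d}$ (and not some other quotient), and verifying that the Zak-transform intertwining operator carries the abstract induced picture precisely onto the operators $M_l, T_k$ with the cocycle $\omega$ built into the semidirect product — getting every lattice and its dual in the right place is where the care is needed.
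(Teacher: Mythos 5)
Your approach is essentially the paper's. The paper first periodizes ($\mathfrak{R}f(t)=(f(t+k))_{k\in\mathbb{Z}^d}$) to get $\pi\simeq\int^{\oplus}_{\mathbb{R}^d/\mathbb{Z}^d}\rho_t\,dt$ with $\rho_t=\mathrm{Ind}_{\Gamma_1}^{\Gamma}\chi_{(1,-t)}$, then uses induction in stages through $\Gamma_0$ and the Fourier transform on $l^2(A\mathbb{Z}^d)$ to split each $\rho_t$ as $\int^{\oplus}_{\mathbb{R}^d/A^{\star}\mathbb{Z}^d}\mathrm{Ind}_{\Gamma_0}^{\Gamma}\chi_{(1,-t,\xi)}\,d\xi$; this two-step procedure is exactly the Zak transform you describe, and indeed the paper's own remark after the proof gives the one-step version $\mathcal{Z}f(x,w,j+A\mathbb{Z}^d)=\sum_k f(x+Ak+j)e^{2\pi i\langle w,Ak\rangle}$ with fiber $l^2(\mathbb{Z}^d/A\mathbb{Z}^d)$ (so the correct auxiliary dimension is $[\mathbb{Z}^d:A\mathbb{Z}^d]=|\det A|=\mathrm{card}(\Gamma/\Gamma_0)$, not $[B^{\star}\mathbb{Z}^d:A\mathbb{Z}^d]$). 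Your irreducibility argument via the Mackey criterion (stabilizer of $\chi_{(1,\sigma)}$ equals $\Gamma_0$ because the first coordinate is $1$) is also the paper's, via its Remark on the stabilizer computation.

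One claim in your last paragraph is wrong and should be deleted: distinct $\sigma$ in $\frac{\mathbb{R}^d}{\mathbb{Z}^d}\times\frac{\mathbb{R}^d}{A^{\star}\mathbb{Z}^d}$ do \emph{not} in general give inequivalent representations, and $\frac{\mathbb{R}^d}{\mathbb{Z}^d}$ is \emph{not} a fundamental domain for the $\Gamma$-action on $\widehat{\Gamma}_0$. The character $\chi_{(1,t,\xi)}$ depends on $t$ only modulo $B^{\star}\mathbb{Z}^d$, and the $\Gamma$-action translates $t$ by $\mathbb{Z}^d$, so two parameters yield equivalent induced representations whenever their first components differ by an element of $\mathbb{Z}^d+B^{\star}\mathbb{Z}^d$; a genuine cross-section for the orbits is the set $\mathbf{E}$ of measure $\tfrac{1}{|\det B\,\det A|\,\dim l^2(\Gamma/\Gamma_0)}$ appearing later in the paper, which has measure different from $1\cdot|\det A^{\star}|$. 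This is precisely why the central decomposition of $\pi$ carries a nontrivial multiplicity function $\ell$. Fortunately the proposition only asserts that each fiber of \eqref{comp} is irreducible (an ``irreducible decomposition'' in the paper's usage, not a multiplicity-free one), so your proof survives once that sentence is removed; but as written it contradicts the rest of the paper.
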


\vskip0.5cm It is worth pointing out here that the decomposition of $\pi$
given in Proposition \ref{operator} is consistent with the decomposition
obtained in Lemma $5.39,$ \cite{hartmut} for the specific case where $d=1$ and
$B$ is the inverse of a natural number larger than one.

\begin{proposition}
\label{pi}Let $m$ be the number of elements in the commutator subgroup
$\left[  \Gamma,\Gamma\right]  .$ There exists a decomposition of the left
regular representation of $\Gamma$ such that
\begin{equation}
L\simeq{\bigoplus\limits_{k=0}^{m-1}}L_{k}%
\end{equation}
and for each $k\in\left\{  0,1,\cdots,m-1\right\}  ,$ the representation
$L_{k}$ is disjoint from $\pi$ whenever $k\neq1.$ Moreover, the Gabor
representation $\pi$ is equivalent to a subrepresentation of the
subrepresentation $L_{1}$ of $L$ if and only if $\left\vert \det B\right\vert
\leq1.$
\end{proposition}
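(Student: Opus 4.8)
The plan is to prove Proposition~\ref{pi} by leveraging the two explicit direct integral decompositions already in hand from Propositions~\ref{L} and~\ref{operator}, together with Mackey's theory of induced representations from the abelian normal subgroup $\Gamma_0$. First I would unwind the decomposition in Proposition~\ref{L}: the left regular representation $L$ of $\Gamma$ restricted to the central subgroup $[\Gamma,\Gamma]\cong\mathbb{Z}_m$ splits according to the $m$ characters of $\mathbb{Z}_m$, and grouping the fibers $\mathrm{Ind}_{\Gamma_0}^{\Gamma}\chi_{(k,\sigma)}$ by the first index $k\in\{0,1,\dots,m-1\}$ yields the summands $L_k := \int_{\frac{\mathbb{R}^d}{B^\star\mathbb{Z}^d}\times\frac{\mathbb{R}^d}{A^\star\mathbb{Z}^d}}^\oplus \mathrm{Ind}_{\Gamma_0}^{\Gamma}\chi_{(k,\sigma)}\,d\sigma$. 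This gives the asserted decomposition $L\simeq\bigoplus_{k=0}^{m-1}L_k$ for free.

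Next I would establish the disjointness claim. Two induced representations $\mathrm{Ind}_{\Gamma_0}^{\Gamma}\chi_{(k,\sigma)}$ and $\mathrm{Ind}_{\Gamma_0}^{\Gamma}\chi_{(k',\sigma')}$ are either equivalent or disjoint, and by Mackey's theory equivalence holds precisely when the characters $\chi_{(k,\sigma)}$ and $\chi_{(k',\sigma')}$ lie in the same $\Gamma/\Gamma_0\cong\mathbb{Z}^d$-orbit under the coadjoint action on $\widehat{\Gamma_0}$. The key point is that the $\mathbb{Z}^d$-action on $\widehat{\Gamma_0} = \widehat{\mathbb{Z}_m}\times\widehat{B\mathbb{Z}^d}\times\widehat{A\mathbb{Z}^d}$ fixes the $\widehat{\mathbb{Z}_m}$-coordinate: conjugation by $T_k$ multiplies a character of $B\mathbb{Z}^d$ by a phase $e^{2\pi i\langle Bl, k\rangle}$ which lies in $[\Gamma,\Gamma]$, hence the action only shuffles the $\widehat{B\mathbb{Z}^d}$ (equivalently $\frac{\mathbb{R}^d}{B^\star\mathbb{Z}^d}$) coordinate within a fixed value of $k$, never changing $k$ itself. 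Consequently every irreducible constituent of $L_k$ restricts to the central character $\zeta\mapsto\zeta^k$ on $[\Gamma,\Gamma]$, whereas by Proposition~\ref{operator} every irreducible constituent of $\pi$ restricts to the character with index $1$. Since representations with distinct central characters are disjoint, $L_k$ is disjoint from $\pi$ for all $k\neq 1$.

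For the final equivalence, I would compare $\pi$ with $L_1$ using the two integral formulas directly. By Proposition~\ref{operator}, $\pi\simeq\int^\oplus_{\frac{\mathbb{R}^d}{\mathbb{Z}^d}\times\frac{\mathbb{R}^d}{A^\star\mathbb{Z}^d}}\mathrm{Ind}_{\Gamma_0}^{\Gamma}\chi_{(1,\sigma)}\,d\sigma$, while $L_1\simeq\int^\oplus_{\frac{\mathbb{R}^d}{B^\star\mathbb{Z}^d}\times\frac{\mathbb{R}^d}{A^\star\mathbb{Z}^d}}\mathrm{Ind}_{\Gamma_0}^{\Gamma}\chi_{(1,\sigma)}\,d\sigma$. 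Modulo the $\mathbb{Z}^d$-orbit identifications coming from Mackey's theory (which collapse the $\frac{\mathbb{R}^d}{B^\star\mathbb{Z}^d}$-directions that get identified into a genuine parameter space for inequivalent irreducibles), both are multiplicity-(possibly-)nontrivial direct integrals over essentially the same irreducible spectrum; the question of whether $\pi$ embeds into $L_1$ reduces to comparing the multiplicity functions, i.e.\ comparing the sizes (Lebesgue measures) of the base tori $\frac{\mathbb{R}^d}{\mathbb{Z}^d}$ for $\pi$ and $\frac{\mathbb{R}^d}{B^\star\mathbb{Z}^d}$ for $L_1$ after accounting for the orbit sizes. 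The ratio of these normalizations is exactly $|\det B^\star|^{-1}\cdot(\text{orbit factor}) $, and bookkeeping the orbit factor (the index $[\mathbb{Z}^d : B^\star\mathbb{Z}^d\cap\mathbb{Z}^d] = |\det A|$ type count) shows the multiplicity of a generic irreducible in $\pi$ is at most its multiplicity in $L_1$ if and only if $|\det B|\leq 1$. The containment then follows from the direct-integral version of the fact that $\rho$ embeds in $\rho'$ when they have the same spectrum and the multiplicity function of $\rho$ is dominated by that of $\rho'$.

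The main obstacle I anticipate is the careful measure-theoretic bookkeeping in this last step: correctly passing from the ``raw'' parameter tori appearing in the integral decompositions to the true Mackey parameter space of inequivalent irreducibles, tracking how Lebesgue measure transforms under the quotient by the $\mathbb{Z}^d$-orbits, and identifying the resulting multiplicity functions with honest cardinalities or densities so that the inequality $|\det B|\leq 1$ emerges cleanly. A secondary subtlety is verifying that each $\mathrm{Ind}_{\Gamma_0}^\Gamma\chi_{(1,\sigma)}$ is genuinely irreducible (so that disjointness versus equivalence is the only dichotomy), but this is already asserted in Proposition~\ref{operator}, so I would simply cite it.
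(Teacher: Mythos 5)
Your decomposition $L\simeq\bigoplus_{k=0}^{m-1}L_k$ and the disjointness argument match the paper: the paper phrases disjointness via disjointness of the $\Gamma$-orbits of $\chi_{(i,\sigma)}$ and $\chi_{(j,\sigma)}$ for $i\neq j$, which is equivalent to your central-character argument, and both are correct. For the final equivalence, however, you take a different route from the paper, and as written it has a gap in the ``if'' direction. The paper does not compare multiplicities by measuring the base tori; it invokes a tiling result of Han and Wang (Lemma \ref{hard}) to produce \emph{nested} fundamental domains $\Sigma_1$ for $\mathbb{Z}^d\times A^{\star}\mathbb{Z}^d$ and $\Sigma_2$ for $B^{\star}\mathbb{Z}^d\times A^{\star}\mathbb{Z}^d$ with $\Sigma_1\subseteq\Sigma_2$ when $|\det B|\leq 1$ (and the reverse inclusion otherwise). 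This immediately exhibits $L_1\simeq \pi\oplus\int_{\Sigma_2-\Sigma_1}^{\oplus}\mathrm{Ind}_{\Gamma_0}^{\Gamma}(\chi_{(1,\sigma)})\,d\sigma$, giving the embedding with no multiplicity bookkeeping at all; in the case $|\det B|>1$ it gives $\pi\simeq L_1\oplus(\text{rest})$, and the paper then uses the central decomposition together with the admissibility bound (F\"uhr, Theorem 3.26) to rule out an embedding.

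The gap in your version is the passage from average to pointwise multiplicity. Comparing the Lebesgue measures of $\frac{\mathbb{R}^d}{\mathbb{Z}^d}$ and $\frac{\mathbb{R}^d}{B^{\star}\mathbb{Z}^d}$, even after dividing by orbit sizes, only tells you that the multiplicity function of $\pi$ is dominated by that of $L_1$ \emph{on average}; a direct integral $\rho$ embeds in $\rho'$ only if the multiplicity functions satisfy $m_{\rho}(\sigma)\leq m_{\rho'}(\sigma)$ for almost every $\sigma$, and average domination does not imply this. The repair is available but must be stated: because $B$ is rational, $\Lambda'=\mathbb{Z}^d+B^{\star}\mathbb{Z}^d$ is again a lattice, the Mackey parameter space for the relevant irreducibles is $\frac{\mathbb{R}^d}{\Lambda'}\times\frac{\mathbb{R}^d}{A^{\star}\mathbb{Z}^d}$, and the covering maps from $\frac{\mathbb{R}^d}{\mathbb{Z}^d}$ and $\frac{\mathbb{R}^d}{B^{\star}\mathbb{Z}^d}$ onto $\frac{\mathbb{R}^d}{\Lambda'}$ have \emph{constant} fiber cardinalities $[\Lambda':\mathbb{Z}^d]=|\det(AB)|$ and $[\Lambda':B^{\star}\mathbb{Z}^d]=|\det A|$ respectively. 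Only with this constancy in hand does your comparison ``multiplicity of $\pi$ $\leq$ multiplicity of $L_1$ iff $|\det(AB)|\leq|\det A|$ iff $|\det B|\leq 1$'' become a proof. You flagged exactly this bookkeeping as the main obstacle but did not indicate how to resolve it; either supply the lattice-index computation above or substitute the paper's nested-fundamental-domain lemma.
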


Although this problem of decomposing the representations $\pi$ and $L$ into
their irreducible components is interesting in its own right, we shall also
address how these decompositions can be exploited to derive interesting and
relevant results in time-frequency analysis. The proof of Proposition \ref{pi}
allows us to state the following:

\begin{enumerate}
\item There exists a measurable set $\mathbf{E\subset\mathbb{R}}^{d}$ which is
a subset of a fundamental domain for the lattice $B^{\star}\mathbb{Z}%
^{d}\times A^{\star}\mathbb{Z}^{d}$, satisfying
\[
\mu\left(  \mathbf{E}\right)  =\frac{1}{\left\vert \det\left(  B\right)
\det\left(  A\right)  \right\vert \dim\left(  l^{2}\left(  \Gamma/\Gamma
_{0}\right)  \right) }
\]
where $\mu$ is the Lebesgue measure on $\mathbb{R}^{d}\times\mathbb{R}^{d}.$

\item There exists a unitary map
\begin{equation}
\mathfrak{A:}\int_{\mathbf{E}}^{\oplus}\left(  \oplus_{k=1}^{\ell\left(
\sigma\right)  }l^{2}\left(  \frac{\Gamma}{\Gamma_{0}}\right)  \right)
d\sigma\rightarrow L^{2}\left(  \mathbb{R}^{d}\right)
\end{equation}
which intertwines $\int_{\mathbf{E}}^{\oplus}\left(  \oplus_{k=1}^{\ell\left(
\sigma\right)  }\mathrm{Ind}_{\Gamma_{0}}^{\Gamma}\left(  \chi_{\left(
1,\sigma\right)  }\right)  \right)  d\sigma\text{ with }\pi$ such that, the
multiplicity function $\ell$ is bounded, the representations $\chi_{\left(
1,\sigma\right)  }$ are characters of the abelian subgroup $\Gamma_{0}$ and
\begin{equation}
\int_{\mathbf{E}}^{\oplus}\left(  \oplus_{k=1}^{\ell\left(  \sigma\right)
}\mathrm{Ind}_{\Gamma_{0}}^{\Gamma}\left(  \chi_{\left(  1,\sigma\right)
}\right)  \right)  d\sigma\label{central}%
\end{equation}
is the central decomposition of $\pi$ (Section $3.4.2,$ \cite{hartmut}).
\end{enumerate}

Moreover, for the case where $|\det(B)|$ $\leq1,$ the multiplicity function
$\ell$ is bounded above by the number of cosets in $\Gamma/\Gamma_{0}$ while
if $|\det(B)|>1,$ then the multiplicity function $\ell$ is bounded but is
greater than the number of cosets in $\Gamma/\Gamma_{0}$ on a set
$\mathbf{E^{\prime}}\subseteq\mathbf{E}$ of positive Lebesgue measure. This
observation that the upper-bound of the multiplicity function behaves
differently in each situation may mistakenly appear to be of limited
importance. However, at the heart of this observation, lies a new
justification of the well-known Density Condition of Gabor systems for the
rational case (Theorem $1.3,$ \cite{Han}). In fact, we shall offer a new proof
of a rational version of the Density Condition for Gabor systems in
Proposition \ref{ratmatrix}. \vskip0.5cm

\noindent It is also worth pointing out that the central decomposition of
$\pi$ as described above has several useful implications. Following the
discussion on Pages $74$-$75,$ \cite{hartmut}, the decomposition given in
(\ref{central}) may be used to:

\begin{enumerate}
\item Characterize the commuting algebra of the representation $\pi$ and its center.

\item Characterize representations which are either quasi-equivalent or
disjoint from $\pi$ (see \cite{hartmut} Theorem $3.17$ and Corollary $3.18$).
\end{enumerate}

Additionally, using the central decomposition of $\pi,$ in the case where the
absolute value of the determinant of $B$ is less or equal to one, we obtain a
complete characterization of vectors $f$ such that $\pi\left(  \Gamma\right)
f$ is a Parseval frame in $L^{2}\left(  \mathbb{R}^{d}\right)  .$

\begin{proposition}
\label{main2} Let us suppose that $\left\vert \det B\right\vert \leq1.$ Then
\begin{equation}
\pi\simeq\int_{\mathbf{E}}^{\oplus}\left(  \oplus_{k=1}^{\ell\left(
\sigma\right)  }\mathrm{Ind}_{\Gamma_{0}}^{\Gamma}\left(  \chi_{\left(
1,\sigma\right)  }\right)  \right)  d\sigma
\end{equation}
with $\ell\left(  \sigma\right)  \leq\mathrm{card}\left(  \Gamma/\Gamma
_{0}\right)  $. Moreover, $\pi\left(  \Gamma\right)  f$ is a Parseval frame in
$L^{2}\left(  \mathbb{R}^{d}\right)  $ if and only if $f=\mathfrak{A}\left(
a\left(  \sigma\right)  \right)  _{\sigma\in\mathbf{E}}$ such that for
$d\sigma$-almost every $\sigma\in\mathbf{E,}$ $\left\Vert a\left(
\sigma\right)  \left(  k\right)  \right\Vert _{l^{2}\left(  \frac{\Gamma
}{\Gamma_{0}}\right)  }^{2}=1\text{ for }1\leq k\leq\ell\left(  \sigma\right)
$ and for distinct $k,j\in\left\{  1,\cdots,\ell\left(  \sigma\right)
\right\}  $, $\left\langle a\left(  \sigma\right)  \left(  k\right)  ,a\left(
\sigma\right)  \left(  j\right)  \right\rangle =0.$
\end{proposition}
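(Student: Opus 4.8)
The plan is to translate the Parseval frame condition for $\pi(\Gamma)f$ into a statement about the image vector $\mathfrak{A}^{-1}f = (a(\sigma))_{\sigma \in \mathbf{E}}$ living in the direct integral $\int_{\mathbf{E}}^{\oplus}\bigl(\oplus_{k=1}^{\ell(\sigma)} l^2(\Gamma/\Gamma_0)\bigr)\, d\sigma$. Since $\mathfrak{A}$ is unitary and intertwines $\pi$ with the central decomposition, $\pi(\Gamma)f$ is a Parseval frame in $L^2(\mathbb{R}^d)$ if and only if the orbit of $a = \mathfrak{A}^{-1}f$ under the direct-integral representation $\rho = \int_{\mathbf{E}}^{\oplus}\bigl(\oplus_{k=1}^{\ell(\sigma)} \mathrm{Ind}_{\Gamma_0}^{\Gamma}\chi_{(1,\sigma)}\bigr)\, d\sigma$ is a Parseval frame. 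So the whole problem reduces to: characterize the Parseval-frame vectors for this concrete direct-integral representation built out of finite-dimensional induced representations $\mathrm{Ind}_{\Gamma_0}^{\Gamma}\chi_{(1,\sigma)}$ acting on $l^2(\Gamma/\Gamma_0)$, whose dimension is $\mathrm{card}(\Gamma/\Gamma_0)$ (finite because $A\mathbb{Z}^d$ has finite index considerations — more precisely $\Gamma/\Gamma_0 \cong \mathbb{Z}^d/A\mathbb{Z}^d$ is finite).

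Next I would carry out the standard ``fiberwise'' analysis of frames in direct integrals. Writing $a(\sigma) = (a(\sigma)(1), \dots, a(\sigma)(\ell(\sigma)))$ with each $a(\sigma)(k) \in l^2(\Gamma/\Gamma_0)$, and using that $\{\mathrm{Ind}_{\Gamma_0}^{\Gamma}\chi_{(1,\sigma)}(\gamma) : \gamma \in \Gamma/\Gamma_0\}$ acts irreducibly on the $\mathrm{card}(\Gamma/\Gamma_0)$-dimensional space, one computes the frame operator of the orbit $\{\rho(\gamma)a : \gamma \in \Gamma\}$. For a single finite-dimensional irreducible summand, a vector $v$ generates a Parseval frame under an irreducible unitary action of a finite group if and only if $\|v\|^2$ equals the reciprocal of the dimension times the group order in the appropriate normalization — but here, because of the direct-integral over $\sigma$ and because $\Gamma$ is infinite, the correct statement (via the Plancherel-type / admissibility computation already implicit in Propositions \ref{L}–\ref{pi}) is that the fiber vectors must form an orthonormal-type system: each $\|a(\sigma)(k)\|^2_{l^2(\Gamma/\Gamma_0)} = 1$ and $\langle a(\sigma)(k), a(\sigma)(j)\rangle = 0$ for $k \neq j$. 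The hypothesis $|\det B| \leq 1$ is exactly what guarantees $\ell(\sigma) \leq \mathrm{card}(\Gamma/\Gamma_0)$, so that such orthonormal systems can exist in each fiber and the multiplicity function is compatible with the claimed characterization.

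The cleanest route for the fiberwise frame computation is to use the intertwining property together with the known structure of $\mathrm{Ind}_{\Gamma_0}^{\Gamma}\chi_{(1,\sigma)}$: for $\gamma \in \Gamma$, the operator acts on $l^2(\Gamma/\Gamma_0) = \mathbb{C}[\Gamma/\Gamma_0]$ by a twisted permutation (permuting cosets and multiplying by unimodular scalars coming from $\chi_{(1,\sigma)}$). Therefore $\sum_{\gamma \in \Gamma} |\langle \rho(\gamma) a, b\rangle|^2$ unwinds, via Parseval on $\Gamma/\Gamma_0$ fiberwise and the Lebesgue integral over $\mathbf{E}$, into $\int_{\mathbf{E}} \sum_{k} |\langle a(\sigma)(k), (\text{something involving } b(\sigma))\rangle|^2 \, d\sigma$-type expressions. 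Setting $b = a$ and demanding this equal $\|a\|^2$ for all $a$ forces, by polarization and a density/measurability argument, the pointwise conditions on the $a(\sigma)(k)$. I expect the main obstacle to be the bookkeeping in this fiberwise frame-operator computation — specifically, correctly tracking how the multiplicity index $k$ interacts with the $\Gamma$-action (the action is diagonal in $k$, which is what makes the cross terms $\langle a(\sigma)(k), a(\sigma)(j)\rangle$ for $k \neq j$ appear and why they must vanish) and verifying the normalization constant is exactly $1$ rather than some determinant factor, which requires invoking the precise form of the Lebesgue measure and the domain $\mathbf{E}$ from item (1) following Proposition \ref{pi}. The irreducibility of each fiber, established in Proposition \ref{operator}, is what makes the within-fiber norm condition reduce to the single scalar equation $\|a(\sigma)(k)\|^2 = 1$ rather than a more complicated rank condition.
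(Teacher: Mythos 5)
Your proposal follows the same route as the paper: transfer the Parseval-frame condition through the unitary intertwiner $\mathfrak{A}$ to the central decomposition $\int_{\mathbf{E}}^{\oplus}\bigl(\oplus_{k=1}^{\ell(\sigma)}\mathrm{Ind}_{\Gamma_{0}}^{\Gamma}\chi_{(1,\sigma)}\bigr)\,d\sigma$ obtained in the proof of Proposition \ref{pi}, and then characterize the admissible (equivalently, Parseval-frame--generating) vectors fiberwise as systems with $\lVert a(\sigma)(k)\rVert^{2}=1$ and mutually orthogonal multiplicity components. The only difference is that where you sketch the fiberwise frame-operator computation by hand (correctly identifying the diagonal action in the multiplicity index, the role of irreducibility of each fiber, and the normalization issue), the paper simply cites the general admissibility criterion from F\"uhr's Lecture Notes (p.~126), so your argument is a correct, slightly more self-contained version of the same proof.
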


This paper is organized around the proofs of the propositions mentioned above. In
Section $2$, we fix notations and we revisit well-known concepts such as
induced representations and direct integrals which are of central importance.
The proof of Proposition \ref{L} is obtained in the third section. The proofs
of Propositions \ref{operator}, \ref{pi} and examples are given in the fourth
section. Finally, the last section contains the proof of Proposition
\ref{main2}.

\section{Preliminaries}

Let us start by fixing our notations and conventions. All representations in
this paper are assumed to be unitary representations. Given two equivalent
representations $\pi$ and $\rho,$ we write that $\pi\simeq\rho.$ We use the
same notation for isomorphic groups. That is, if $G$ and $H$ are isomorphic
groups, we write that $G\simeq H.$ All sets considered in this paper will be
assumed to be measurable. Given two disjoint sets $A$ and $B$, the disjoint
union of the sets is denoted $A\overset{\cdot}{\cup}B.$ Let $\mathbf{H}$ be a
Hilbert space. The identity operator acting on $\mathbf{H}$ is denoted
$1_{\mathbf{H}}.$ The unitary equivalence classes of irreducible unitary
representations of $G$ is called the unitary dual of $G$ and is denoted
$\widehat{G}.$

Several of the proofs presented in this work rely on basic properties of
induced representations and direct integrals. The following discussion is
mainly taken from Chapter $6,$ \cite{Folland}. Let $G$ be a locally compact
group, and let $K$ be a closed subgroup of $G.$ Let us define $q:G\rightarrow
G/K$ to be the canonical quotient map and let $\varphi$ be a unitary
representation of the group $K$ acting in some Hilbert space which we call
$\mathbf{H.}$ Next, let $\mathbf{K}_{1}$\textbf{ }be the set of continuous
$\mathbf{H}$-valued functions $f$ defined over $G$ satisfying the following properties:

\begin{enumerate}
\item $q\left(  \text{support }\left(  f\right)  \right)  $ is compact,

\item $f\left(  gk\right)  =\varphi\left(  k\right)  ^{-1}f\left(  g\right)  $
for $g\in G$ and $k\in K.$
\end{enumerate}

Clearly, $G$ acts on the set $\mathbf{K}_{1}$ by left translation. Now, to
simplify the presentation, let us suppose that $G/K$ admits a $G$-invariant
measure. We remark that in general, this is not always the case. However, the
assumption that $G/K$ admits a $G$-invariant measure holds for the class of
groups considered in this paper. We construct a unitary representation of $G$
by endowing $\mathbf{K}_{1}$ with the following inner product:
\[
\left\langle f,f^{\prime}\right\rangle =\int_{G/K}\left\langle f\left(
g\right)  ,f^{\prime}\left(  g\right)  \right\rangle _{\mathbf{H}}\text{
}d\left(  gK\right)  \text{ for }f,f^{\prime}\in\mathbf{K}_{1}.
\]
Now, let $\mathbf{K}$ be the Hilbert completion of the space $\mathbf{K}_{1}$
with respect to this inner product. The translation operators extend to
unitary operators on $\mathbf{K}$ inducing a unitary representation
$\mathrm{Ind}_{K}^{G}\left(  \varphi\right)  $ which is defined as follows:%
\[
\mathrm{Ind}_{K}^{G}\left(  \varphi\right)  \left(  x\right)  f\left(
g\right)  =f\left(  x^{-1}g\right)  \text{ for }f\in\mathbf{K.}%
\]
We notice that if $\varphi$ is a character, then the Hilbert space
$\mathbf{K}$ can be naturally identified with $L^{2}\left(  G/H\right)  .$
Induced representations are natural ways to construct unitary representations.
For example, it is easy to prove that if $e$ is the identity element of $G$
and if $1$ is the trivial representation of $\left\{  e\right\}  $ then the
representation $\mathrm{Ind}_{\left\{  e\right\}  }^{G}\left(  1\right)  $ is
equivalent to the left regular representation of $G.$ Other properties of
induction such as induction in stages will be very useful for us. The reader
who is not familiar with these notions is invited to refer to Chapter $6$ of
the book of Folland \cite{Folland} for a thorough presentation.

We will now present a short introduction to direct integrals; which are
heavily used in this paper. For a complete presentation, the reader is
referred to Section $7.4,$ \cite{Folland}. Let $\left\{  H_{\alpha}\right\}
_{\alpha\in A}$ be a family of separable Hilbert spaces indexed by a set $A.$
Let $\mu$ be a measure defined on $A.$ We define the direct integral of this
family of Hilbert spaces with respect to $\mu$ as the space which consists of
vectors $\varphi$ defined on the parameter space $A$ such that $\varphi\left(
\alpha\right)  $ is an element of $H_{\alpha}$ for each $\alpha\in A$ and
$\int_{A}\left\Vert \varphi\left(  \alpha\right)  \right\Vert _{H_{\alpha}%
}^{2}d\mu\left(  \alpha\right)  <\infty$ with some additional measurability
conditions which we will clarify. A family of separable Hilbert spaces
$\left\{  H_{\alpha}\right\}  _{\alpha\in A}$ indexed by a Borel set $A$ is
called a field of Hilbert spaces over $A.$ A map $\varphi:A\rightarrow
{\displaystyle\bigcup\limits_{\alpha\in A}}H_{\alpha}\text{ such that }%
\varphi\left(  \alpha\right)  \in H_{\alpha}$ is called a vector field on $A.$
A measurable field of Hilbert spaces over the indexing set $A$ is a field of
Hilbert spaces $\left\{  H_{\alpha}\right\}  _{\alpha\in A}$ together with a
countable set $\left\{  e_{j}\right\}  _{j}$ of vector fields such that

\begin{enumerate}
\item the functions $\alpha\mapsto\left\langle e_{j}\left(  \alpha\right)
,e_{k}\left(  \alpha\right)  \right\rangle _{H_{\alpha}}$ are measurable for
all $j,k,$

\item the linear span of $\left\{  e_{k}\left(  \alpha\right)  \right\}  _{k}$
is dense in $H_{\alpha}$ for each $\alpha\in A.$
\end{enumerate}

The direct integral of the spaces $H_{\alpha}$ with respect to the measure
$\mu$ is denoted by $\int_{A}^{\oplus}H_{\alpha}d\mu\left(  \alpha\right)  $
and is the space of measurable vector fields $\varphi$ on $A$ such that
$\int_{A}\left\Vert \varphi\left(  \alpha\right)  \right\Vert _{H_{\alpha}%
}^{2}d\mu\left(  \alpha\right)  <\infty.$ The inner product for this Hilbert
space is: $\left\langle \varphi_{1},\varphi_{2}\right\rangle =\int
_{A}\left\langle \varphi_{1}\left(  \alpha\right)  ,\varphi_{2}\left(
\alpha\right)  \right\rangle _{H_{\alpha}}d\mu\left(  \alpha\right)  \text{
for }\varphi_{1},\varphi_{2}\in\int_{A}^{\oplus}H_{\alpha}d\mu\left(
\alpha\right) .$

\section{The regular representation and its decompositions}

In this section, we will discuss the Plancherel theory for $\Gamma.$ For this
purpose, we will need a complete description of the unitary dual of $\Gamma.$
This will allow us to obtain a central decomposition of the left regular
representation of $\Gamma.$ Also, a proof of Proposition \ref{L} will be given
in this section.

Let $L$ be the left regular representation of $\Gamma.$ Suppose that $\Gamma$
is not commutative and that $B$ is a rational matrix. We have shown that
$\Gamma$ has an abelian normal subgroup of finite index which we call
$\Gamma_{0}$. Moreover, there is a canonical action ($74$-$79,$
\cite{Lipsman2}) of $\Gamma$ on the group $\widehat{\Gamma}_{0}$ (the unitary
dual of $\Gamma_{0})$ such that for each $P\in\Gamma$ and $\chi\in
\widehat{\Gamma}_{0},$ $P\cdot\chi\left(  Q\right)  =\chi\left(
P^{-1}QP\right) .$ Let us suppose that $\chi=\chi_{\left(  \lambda_{1}%
,\lambda_{2},\lambda_{3}\right)  }\ $is a character in the unitary dual
$\widehat{\Gamma}_{0}$ where $\left(  \lambda_{1},\lambda_{2},\lambda
_{3}\right)  \in\left\{  0,1,\cdots,m-1\right\}  \times\frac{\mathbb{R}^{d}%
}{B^{\star}\mathbb{Z}^{d}}\times\frac{\mathbb{R}^{d}}{A^{\star}\mathbb{Z}^{d}%
}\simeq\widehat{\Gamma}_{0},$ and $\chi_{\left(  \lambda_{1},\lambda
_{2},\lambda_{3}\right)  }\left(  e^{\frac{2\pi ik}{m}}M_{Bl}T_{Aj}\right)
=e^{\frac{2\pi i\lambda_{1}k}{m}}e^{2\pi i\left\langle \lambda_{2}%
,Bl\right\rangle }e^{2\pi i\left\langle \lambda_{3},Aj\right\rangle }.$
We observe that $\mathbb{R}^{d}$ is identified with its dual $\widehat
{\mathbb{R}^{d}}$ and $\left\{  0,1,\cdots,m-1\right\}  $ parametrizes the
unitary dual of the commutator subgroup $\left[  \Gamma,\Gamma\right]  $ which
is isomorphic to the cyclic group $\mathbb{Z}_{m}.$ For any $\tau\in\left[
\Gamma,\Gamma\right]  ,$ we have $\tau\cdot\chi_{\left(  \lambda_{1}%
,\lambda_{2},\lambda_{3}\right)  }=\chi_{\left(  \lambda_{1},\lambda
_{2},\lambda_{3}\right)  }.$ Moreover, given $M_{l},T_{k}\in\Gamma,$
\[
M_{l}\cdot\chi_{\left(  \lambda_{1},\lambda_{2},\lambda_{3}\right)  }%
=\chi_{\left(  \lambda_{1},\lambda_{2},\lambda_{3}\right)  },\text{ and }%
T_{k}\cdot\chi_{\left(  \lambda_{1},\lambda_{2},\lambda_{3}\right)  }%
=\chi_{\left(  \lambda_{1},\lambda_{2}-k\lambda_{1},\lambda_{3}\right)  }.
\]
Next, let $\Gamma_{\chi}=\left\{  P\in\Gamma:P\cdot\chi=\chi\right\}  .$ It is
easy to see that the stability subgroup of the character $\chi_{\left(
\lambda_{1},\lambda_{2},\lambda_{3}\right)  }$ is described as follows:
\begin{equation}
\Gamma_{\chi_{\left(  \lambda_{1},\lambda_{2},\lambda_{3}\right)  }}=\left\{
\tau M_{l}T_{k}\in\Gamma:\tau\in\left[  \Gamma,\Gamma\right]  ,l\in
B\mathbb{Z}^{d},k\lambda_{1}\in B^{\star}\mathbb{Z}^{d}\right\}  .
\label{stab}%
\end{equation}
It follows from (\ref{stab}) that the stability group $\Gamma_{\chi_{\left(
\lambda_{1},\lambda_{2},\lambda_{3}\right)  }}$ contains the normal subgroup
$\Gamma_{0}.$ Indeed, if $\lambda_{1}=0$ then $\Gamma_{\chi_{\left(
\lambda_{1},\lambda_{2},\lambda_{3}\right)  }}=\Gamma.$ Otherwise,
\begin{equation}
\Gamma_{\chi_{\left(  \lambda_{1},\lambda_{2},\lambda_{3}\right)  }}=\left\{
\tau M_{l}T_{k}\in\Gamma:\tau\in\left[  \Gamma,\Gamma\right]  ,\text{ }l\in
B\mathbb{Z}^{d},k\in\left(  \frac{1}{\lambda_{1}}B^{\star}\mathbb{Z}%
^{d}\right)  \cap\mathbb{Z}^{d}\right\}  . \label{stabilizer}%
\end{equation}
According to Mackey theory (see Page $76,$ \cite{Lipsman2}) and well-known
results of Kleppner and Lipsman (Page $460,$ \cite{Lipsman2}), every
irreducible representation of $\Gamma$ is of the type $\mathrm{Ind}%
_{\Gamma_{\chi}}^{\Gamma}\left(  \widetilde{\chi}\otimes\widetilde{\sigma
}\right) $ where $\widetilde{\chi}$ is an extension of a character $\chi$ of
$\Gamma_{0}$ to $\Gamma_{\chi},$ and $\widetilde{\sigma}$ is the lift of an
irreducible representation $\sigma$ of $\Gamma_{\chi}/\Gamma_{0}$ to
$\Gamma_{\chi}.$ Also two irreducible representations $\mathrm{Ind}%
_{\Gamma_{\chi}}^{\Gamma}\left(  \widetilde{\chi}\otimes\widetilde{\sigma
}\right)  \text{ and }\mathrm{Ind}_{\Gamma_{\chi_{1}}}^{\Gamma}\left(
\widetilde{\chi_{1}}\otimes\widetilde{\sigma}\right) $ are equivalent if and
only if the characters $\chi$ and $\chi_{1}$ of $\Gamma_{0}$ belong to the
same $\Gamma$-orbit. Since $\Gamma$ is a finite extension of its subgroup
$\Gamma_{0}$, then it is well known that there is a measurable set which is a
cross-section for the $\Gamma$-orbits in $\widehat{\Gamma}_{0}.$ Now, let
$\Sigma$ be a measurable subset of $\widehat{\Gamma}_{0}$ which is a
cross-section for the $\Gamma$-orbits in $\widehat{\Gamma}_{0}.$ The unitary
dual of $\Gamma$ is a fiber space which is described as follows:
\[
\widehat{\Gamma}={\displaystyle\bigcup\limits_{\chi\in\Sigma}}\left\{
\pi_{\chi,\sigma}=\mathrm{Ind}_{\Gamma_{\chi}}^{\Gamma}\left(  \widetilde
{\chi}\otimes\widetilde{\sigma}\right)  \text{ }:\text{ }\sigma\in
\widehat{\frac{\Gamma_{\chi}}{\Gamma_{0}}}\right\}  .
\]
Finally, since $\Gamma$ is a type $I$ group, there exists a unique standard
Borel measure on $\widehat{\Gamma}$ such that the left regular representation
of the group $\Gamma$ is equivalent to a direct integral of all elements in
the unitary dual of $\Gamma,$ and the multiplicity of each irreducible
representation occurring is equal to the dimension of the corresponding
Hilbert space. So, we obtain a decomposition of the representation $L$ into a
direct integral decomposition of its irreducible representations as follows
(see Theorem $3.31,$ \cite{hartmut} and Theorem $5.12,$ \cite{moussa})
\begin{equation}
L\simeq\int_{\Sigma}^{\oplus}\int_{\widehat{\frac{\Gamma_{\chi}}{\Gamma_{0}}}%
}^{\oplus}\oplus_{k=1}^{\mathrm{dim}\left( l^{2}\left(  \frac{\Gamma}%
{\Gamma_{\chi}}\right) \right) }\pi_{\chi,\sigma}d\omega_{\chi}\left(
\sigma\right)  d\chi\label{Left}%
\end{equation}
and $\dim\left(  l^{2}\left(  \frac{\Gamma}{\Gamma_{\chi}}\right)  \right)
\leq\mathrm{card}\left(  \Gamma/\Gamma_{0}\right)  .$ The fact that
$\dim\left(  l^{2}\left(  \frac{\Gamma}{\Gamma_{\chi}}\right)  \right)
\leq\mathrm{card}\left(  \Gamma/\Gamma_{0}\right)  $ is justified because the
number of representative elements of the quotient group $\frac{\Gamma}%
{\Gamma_{\chi}}$ is bounded above by the number of elements in $\frac{\Gamma
}{\Gamma_{0}}.$ Moreover the direct integral representation in (\ref{Left}) is
realized as acting in the Hilbert space
\begin{equation}
\int_{\Sigma}^{\oplus}\int_{\widehat{\frac{\Gamma_{\chi}}{\Gamma_{0}}}%
}^{\oplus}\oplus_{k=1}^{\mathrm{dim}\left( l^{2}\left(  \frac{\Gamma}%
{\Gamma_{\chi}}\right)  \right) }l^{2}\left(  \frac{\Gamma}{\Gamma_{\chi}%
}\right) d\omega_{\chi}\left(  \sigma\right)  d\chi. \label{canonL}%
\end{equation}

Although the decomposition in (\ref{Left}) is canonical, the decomposition
provided by Proposition \ref{L} will be more convenient for us.

\subsection{Proof of Proposition \ref{L}}
Let $e$ be the identity element in $\Gamma,$ and let $1$ be the trivial
representation of the trivial group $\left\{  e\right\}  .$ We observe that
$L\simeq\mathrm{Ind}_{\left\{  e\right\}  }^{\Gamma}\left(  1\right)  .$ It
follows that
\begin{align}
L  &  \simeq\mathrm{Ind}_{\Gamma_{0}}^{\Gamma}\left(  \mathrm{Ind}_{\left\{
e\right\}  }^{\Gamma_{0}}\left(  1\right)  \right) \label{eq}\\
&  \simeq\mathrm{Ind}_{\Gamma_{0}}^{\Gamma}\left(  \int_{\widehat{\Gamma}_{0}%
}^{\oplus}\chi_{t}\text{ }dt\right) \nonumber\\
&  \simeq\int_{\widehat{\Gamma}_{0}}^{\oplus}\mathrm{Ind}_{\Gamma_{0}}%
^{\Gamma}\left(  \chi_{t}\right)  \text{ }dt. \label{q}%
\end{align}
The second equivalence given above is coming from the fact that $\mathrm{Ind}%
_{\left\{  e\right\}  }^{\Gamma_{0}}\left(  1\right)  $ is equivalent to the
left regular representation of the group $\Gamma_{0}.$ Since $\Gamma_{0}$ is
abelian, its left regular representation admits a direct integral
decomposition into elements in the unitary dual of $\Gamma_{0},$ each
occurring once. Moreover, the measure $dt$ is a Lebesgue measure (also a Haar
measure) supported on the unitary dual of the group, and the Plancherel
transform is the unitary operator which is intertwining the representations
$\mathrm{Ind}_{\left\{  e\right\}  }^{\Gamma_{0}}\left(  1\right)  \text{ and
}\int_{\widehat{\Gamma}_{0}}^{\oplus}\chi_{t}\text{ }dt.$ Based on the
discussion above, it is worth mentioning that the representations occurring in
(\ref{q}) are generally reducible since it is not always the case that
$\Gamma_{0}=\Gamma_{\chi_{t}}.$ We observe that $\widehat{\Gamma}_{0}$ is
parametrized by the group $\mathbb{Z}_{m}\times\frac{\mathbb{R}^{d}}{B^{\star
}\mathbb{Z}^{d}}\times\frac{\mathbb{R}^{d}}{A^{\star}\mathbb{Z}^{d}}.$ Thus,
identifying $\widehat{\Gamma}_{0}$ with $\mathbb{Z}_{m}\times\frac
{\mathbb{R}^{d}}{B^{\star}\mathbb{Z}^{d}}\times\frac{\mathbb{R}^{d}}{A^{\star
}\mathbb{Z}^{d}},$ we reach the desired result: $L\simeq\oplus_{k=0}^{m-1}%
\int_{\frac{\mathbb{R}^{d}}{B^{\star}\mathbb{Z}^{d}}\times\frac{\mathbb{R}%
^{d}}{A^{\star}\mathbb{Z}^{d}}}^{\oplus}\mathrm{Ind}_{\Gamma_{0}}^{\Gamma}%
\chi_{\left(  k,t\right)  }\text{ }dt.$

\begin{remark}
\label{indirreducible}Referring to (\ref{stabilizer}), we remark that
$\Gamma_{\chi_{\left(  1,t_{2},t_{3}\right)  }}=\Gamma_{0},$ and in this case
$\mathrm{Ind}_{\Gamma_{0}}^{\Gamma}\left(  \chi_{\left(  1,t_{2},t_{3}\right)
}\right)  $ is an irreducible representation of the group $\Gamma.$
\end{remark}

\section{Decomposition of $\pi$}

In this section, we will provide a decomposition of the Gabor representation
$\pi.$ For this purpose, it is convenient to regard the set $\mathbb{R}^{d}$
as a fiber space, with base space the $d$-dimensional torus. Next, for any
element $t$ in the torus, the corresponding fiber is the set $t+\mathbb{Z}%
^{d}.$ With this concept in mind, let us define the periodization map
$\mathfrak{R}:L^{2}\left(  \mathbb{R}^{d}\right)  \rightarrow\int
_{\frac{\mathbb{R}^{d}}{\mathbb{Z}^{d}}}^{\oplus}l^{2}\left(  \mathbb{Z}%
^{d}\right)  dt$ such that $\mathfrak{R}f\left(  t\right)  =\left(  f\left(
t+k\right)  \right)  _{k\in\mathbb{Z}^{d}}.$ We remark here that we clearly
abuse notation by making no distinction between $\frac{\mathbb{R}^{d}%
}{\mathbb{Z}^{d}}$ and a choice of a measurable subset of $\mathbb{R}^{d}$ which is a fundamental domain for $\frac{\mathbb{R}%
^{d}}{\mathbb{Z}^{d}}.$ Next, the inner product which we endow the direct
integral Hilbert space $\int_{\frac{\mathbb{R}^{d}}{\mathbb{Z}^{d}}}^{\oplus
}l^{2}\left(  \mathbb{Z}^{d}\right)  dt$ with is defined as follows. For any
vectors
\[
f\text{ and }h\in\int_{\frac{\mathbb{R}^{d}}{\mathbb{Z}^{d}}}^{\oplus}%
l^{2}\left(  \mathbb{Z}^{d}\right)  dt
\]
the inner product of $f$ and $g$ is equal to $\left\langle f,h\right\rangle
_{\mathfrak{R}\left(  L^{2}\left(  \mathbb{R}^{d}\right)  \right)  }%
=\int_{\frac{\mathbb{R}^{d}}{\mathbb{Z}^{d}}}\left(  \sum_{k\in\mathbb{Z}^{d}%
}f\left(  t+k\right)  \overline{h\left(  t+k\right)  }\right)  \text{ }dt,$
and it is easy to check that $\mathfrak{R}$ is a unitary map.

\subsection{Proof of Proposition \ref{operator}}

For $t\in\mathbb{R}^{d},$ we consider the unitary character $\chi_{\left(
1,-t\right)  }:\Gamma_{1}\rightarrow\mathbb{T}$ which is defined by
$\chi_{\left(  1,-t\right)  }\left(  e^{2\pi iz}M_{l}\right)  =e^{2\pi
iz}e^{-2\pi i\left\langle t,l\right\rangle }.$ Next, we compute the action of
the unitary representation $\mathrm{Ind}_{\Gamma_{1}}^{\Gamma}\chi_{\left(
1,-t\right)  }$ of $\Gamma$ which is acting in the Hilbert space
\[
\mathbf{H}_{t}=\left\{
\begin{array}
[c]{c}%
f:\Gamma\rightarrow\mathbb{C}:f\left(  PQ\right)  =\chi_{\left(  1,-t\right)
}\left(  Q\right)  ^{-1}f\left(  P\right)  ,Q\in\Gamma_{1}\text{ }\\
\text{and }\sum_{P\Gamma_{1}\in\frac{\Gamma}{\Gamma_{1}}}\left\vert f\left(
P\right)  \right\vert ^{2}<\infty
\end{array}
\right\}  .
\]
Let $\Theta$ be a cross-section for $\Gamma/\Gamma_{1}$ in $\Gamma.$ The
Hilbert space $\mathbf{H}_{t}$ is naturally identified with $l^{2}\left(
\Theta\right)  $ since for any $Q\in\Gamma_{1},$ we have $\left\vert f\left(
PQ\right)  \right\vert =\left\vert f\left(  P\right)  \right\vert .$ Via this
identification, we may realize the representation $\mathrm{Ind}_{\Gamma_{1}%
}^{\Gamma}\chi_{\left(  1,-t\right)  }$ as acting in $l^{2}\left(
\Theta\right)  .$ More precisely, for $a\in l^{2}\left(  \Theta\right) $ and
$\rho_{t}=\mathrm{Ind}_{\Gamma_{1}}^{\Gamma}\chi_{\left(  1,-t\right)  }$ we
have
\[
\left(  \rho_{t}\left(  X\right)  a\right)  \left(  T_{j}\right)  =\left\{
\begin{array}
[c]{cc}%
a\left(  T_{j-k}\right)  & \text{ if }X=T_{k}\\
e^{-2\pi i\left\langle j,l\right\rangle }e^{-2\pi i\left\langle
t,l\right\rangle }a\left(  T_{j}\right)  & \text{if }X=M_{l}\\
e^{2\pi i\theta}a\left(  T_{j}\right)  & \text{if }X=e^{2\pi i\theta}%
\end{array}
\right.  .
\]
Defining the unitary operator $\mathfrak{J:}$ $l^{2}\left(  \Theta\right)
\rightarrow l^{2}\left(  \mathbb{Z}^{d}\right)  $ such that $\left(
\mathfrak{J}a\right)  \left(  j\right)  =a\left(  T_{j}\right)  ,$ it is
easily checked that:%
\[
\mathfrak{J}^{-1}\left[  \left(  \mathfrak{R}Xf\right)  \left(  t\right)
\right]  =\left\{
\begin{array}
[c]{cc}%
\rho_{t}\left(  T_{k}\right)  \left[  \mathfrak{J}^{-1}\left(  \mathfrak{R}%
f\left(  t\right)  \right)  \right]  & \text{ if }X=T_{k}\\
\rho_{t}\left(  M_{l}\right)  \left[  \mathfrak{J}^{-1}\left(  \mathfrak{R}%
f\left(  t\right)  \right)  \right]  & \text{if }X=M_{l}\\
\rho_{t}\left(  e^{2\pi i\theta}\right)  \left[  \mathfrak{J}^{-1}\left(
\mathfrak{R}f\left(  t\right)  \right)  \right]  & \text{if }X=e^{2\pi
i\theta}%
\end{array}
\right.  .
\]
Thus, the representation $\pi$ is unitarily equivalent to
\begin{equation}
\int_{\frac{\mathbb{R}^{d}}{\mathbb{Z}^{d}}}^{\oplus}\rho_{t}\text{ }dt.
\label{Part 1}%
\end{equation}
Now, we remark that $\rho_{t}$ is not an irreducible representation of the
group $\Gamma$. Indeed, by inducing in stages (see Page $166,$ \cite{Folland}%
), we obtain that the representation $\rho_{t}$ is unitarily equivalent to
$\mathrm{Ind}_{\Gamma_{0}}^{\Gamma}\left(  \mathrm{Ind}_{\Gamma_{1}}%
^{\Gamma_{0}}\left(  \chi_{\left(  1,-t\right)  }\right)  \right)  $ and
$\mathrm{Ind}_{\Gamma_{1}}^{\Gamma_{0}}\chi_{\left(  1,-t\right)  }$ acts in
the Hilbert space
\begin{equation}
\mathbf{K}_{t}=\left\{
\begin{array}
[c]{c}%
f:\Gamma_{0}\rightarrow\mathbb{C}:f\left(  PQ\right)  =\chi_{\left(
1,-t\right)  }\left(  Q^{-1}\right)  f\left(  P\right)  ,Q\in\Gamma_{1}\text{
}\\
\text{and }\sum_{P\Gamma_{1}\in\frac{\Gamma_{0}}{\Gamma_{1}}}\left\vert
f\left(  P\right)  \right\vert ^{2}<\infty
\end{array}
\right\}  .
\end{equation}
Since $\chi_{(1,-t)}$ is a character, for any $f\in\mathbf{K}_{t},$ we notice
that $\left\vert f\left(  PQ\right)  \right\vert =\left\vert f\left(
P\right)  \right\vert $ for $Q\in\Gamma_{1}.$ Thus, the Hilbert space
$\mathbf{K}_{t}$ is naturally identified with $l^{2}\left(  \frac{\Gamma_{0}%
}{\Gamma_{1}}\right)  \simeq l^{2}\left(  A\mathbb{Z}^{d}\right)  $ where
$A\mathbb{Z}^{d}$ is a parametrizing set for the quotient group $\frac
{\Gamma_{0}}{\Gamma_{1}}.$ Via this identification, we may realize
$\mathrm{Ind}_{\Gamma_{1}}^{\Gamma_{0}}\chi_{\left(  1,-t\right)  }$ as acting
in $l^{2}\left(  A\mathbb{Z}^{d}\right)  .$ More precisely, for $T_{j},j\in
A\mathbb{Z}^{d},$ we compute the action of $\mathrm{Ind}_{\Gamma_{1}}%
^{\Gamma_{0}}\chi_{\left(  1,-t\right)  }$ as follows:%
\[
\left[  \mathrm{Ind}_{\Gamma_{1}}^{\Gamma_{0}}\chi_{\left(  1,-t\right)
}\left(  X\right)  f\right]  \left(  T_{j}\right)  =\left\{
\begin{array}
[c]{cc}%
f\left(  T_{j-k}\right)  & \text{ if }X=T_{k}\\
e^{-2\pi i\left\langle t,l\right\rangle }f\left(  T_{j}\right)  & \text{if
}X=M_{l}\\
e^{2\pi i\theta}f\left(  T_{j}\right)  & \text{if }X=e^{2\pi i\theta}%
\end{array}
\right.  .
\]
Now, let $\mathbf{F}_{A\mathbb{Z}^{d}}$ be the Fourier transform defined on
$l^{2}\left(  A\mathbb{Z}^{d}\right)  .$ Given a vector $f$ in $l^{2}\left(
A\mathbb{Z}^{d}\right)  ,$ it is not too hard to see that%
\[
\left[  \mathbf{F}_{A\mathbb{Z}^{d}}\left(  \mathrm{Ind}_{\Gamma_{1}}%
^{\Gamma_{0}}\chi_{\left(  1,-t\right)  }\left(  X\right)  f\right)  \right]
\left(  \xi\right)  =\left\{
\begin{array}
[c]{cc}%
\chi_{\xi}\left(  T_{k}\right)  \mathbf{F}_{A\mathbb{Z}^{d}}f\left(
\xi\right)  & \text{ if }X=T_{k}\\
e^{-2\pi i\left\langle t,l\right\rangle }\mathbf{F}_{A\mathbb{Z}^{d}}f\left(
\xi\right)  & \text{if }X=M_{l}\\
e^{2\pi i\theta}\mathbf{F}_{A\mathbb{Z}^{d}}f\left(  \xi\right)  & \text{if
}X=e^{2\pi i\theta}%
\end{array}
\right.
\]
where $\chi_{\xi}$ is a character of the discrete group $A \mathbb{Z}^{d}.$ As
a result, given $X\in\Gamma$ we obtain:%
\begin{equation}
\mathbf{F}_{A\mathbb{Z}^{d}}\circ\rho_{t}\left(  X\right)  \circ
\mathbf{F}_{A\mathbb{Z}^{d}}^{-1}=\int_{\frac{\mathbb{R}^{d}}{A^{\star
}\mathbb{Z}^{d}}}^{\oplus}\chi_{\left(  1,-t,\xi\right)  }\left(  X\right)
\text{ }d\xi, \label{induced}%
\end{equation}
where $\chi_{\left(  1,-t,\xi\right)  }$ is a character of $\Gamma_{0}$
defined as follows:%
\[
\chi_{\left(  1,-t,\xi\right)  }\left(  X\right)  =\left\{
\begin{array}
[c]{cc}%
\chi_{\xi}\left(  T_{k}\right)  & \text{ if }X=T_{k}\\
e^{-2\pi i\left\langle t,l\right\rangle } & \text{if }X=M_{l}\\
e^{2\pi i\theta} & \text{if }X=e^{2\pi i\theta}%
\end{array}
\right.  .
\]
Using the fact that induction commutes with direct integral decomposition (see Page $41,$\cite{Corwin}) we have
\begin{equation}
\rho_{t}\simeq\mathrm{Ind}_{\Gamma_{0}}^{\Gamma}\left(  \int_{\frac
{\mathbb{R}^{d}}{A^{\star}\mathbb{Z}^{d}}}^{\oplus}\chi_{\left(
1,-t,\xi\right)  }\text{ }d\xi\right)  \simeq\int_{\frac{\mathbb{R}{d}%
}{A^{\star}\mathbb{Z}^{d}}}^{\oplus}\left(  \mathrm{Ind}_{\Gamma_{0}}^{\Gamma
}\chi_{\left(  1,-t,\xi\right)  }\right)  \text{ }d\xi. \label{Part 2}%
\end{equation}
Putting (\ref{Part 1}) and (\ref{Part 2}) together, we arrive at: $\pi
\simeq\int_{\frac{\mathbb{R}^{d}}{\mathbb{Z}^{d}}\times\frac{\mathbb{R}^{d}%
}{A^{\star}\mathbb{Z}^{d}}}^{\oplus}\mathrm{Ind}_{\Gamma_{0}}^{\Gamma}%
\chi_{\left(  1,\sigma\right)  }d\sigma.$ Finally, the fact that
$\mathrm{Ind}_{\Gamma_{0}}^{\Gamma}\chi_{\left(  1,\sigma\right)  }$ is an
irreducible representation is due to Remark \ref{indirreducible}. This
completes the proof.
\begin{remark}
We shall offer here a different proof of Proposition \ref{operator} by exhibiting an explicit intertwining operator which is a version of
the Zak transform for the representation $\pi$ and the direct integral
representation described given in (\ref{comp}).  Let $C_{c}\left(\mathbb{R}^{d}\right)  $ be the space of continuous functions on $\mathbb{R}^{d}$ which are compactly supported. Let $\mathcal{Z}$ be the operator which
maps each $f\in C_{c}\left(\mathbb{R}^{d}\right)$ to the function
\begin{equation}
\left(  \mathcal{Z}f\right)  \left(  x,w,j+A\mathbb{Z}^{d}\right)  =\sum_{k\in\mathbb{Z}^{d}}f\left(  x+Ak+j\right)  e^{2\pi i\left\langle w,Ak\right\rangle }%
\equiv\phi\left(  x,w,j+A\mathbb{Z}^{d}\right)  \label{Zack}
\end{equation}
where $x,w\in\mathbb{R}^{d}$ and $j$ is an element of a cross-section for $\frac{\mathbb{Z}^{d}}{A\mathbb{Z}^{d}}$ in the lattice $\mathbb{Z}^{d}.$ Given arbitrary $m^{\prime}\in\mathbb{Z}^{d},$ we may write $m^{\prime}=Ak^{\prime}+j^{\prime}$ where $k^{\prime}\in\mathbb{Z}^{d}$ and $j^{\prime}$ is an element of a cross-section for $\frac{\mathbb{Z}^{d}}{A\mathbb{Z}^{d}}.$ Next, it is worth observing that given $m,m^{\prime}\in\mathbb{Z}^{d},$ $
\phi\left(  x,w+A^{\star}m,j+A\mathbb{Z}^{d}\right) =\phi\left(  x,w,j+A\mathbb{Z}^{d}\right)$ and $
\phi\left(  x+m^{\prime},w,j+A\mathbb{Z}^{d}\right)$ is equal to $e^{-2\pi i\left\langle w,Ak^{\prime}\right\rangle }
\phi\left(  x,w,j+j^{\prime}+A\mathbb{Z}^{d}\right).$
This observation will later help us explain the meaning of Equality
(\ref{Zack}). Let $\Sigma_{1}$ and $\Sigma_{2}$ be measurable cross-sections
for $\frac{\mathbb{R}^{d}}{\mathbb{Z}^{d}}$ and $\frac{\mathbb{R}^{d}}{A^{\star}\mathbb{Z}^{d}}$ respectively in $\mathbb{R}^{d}$. For example, we may pick $\Sigma_{1}=\left[  0,1\right)  ^{d}$ and
$\Sigma_{2}=A^{\star}\left[  0,1\right)  ^{d}.$ Since $f$ is
square-integrable, by a periodization argument it is easy to see that
\begin{equation}
\left\Vert f\right\Vert _{L^{2}\left(\mathbb{R}^{d}\right)  }^{2}=\int_{\Sigma_{1}}\sum_{m\in\mathbb{Z}^{d}}\left\vert f\left(  x+m\right)  \right\vert ^{2}dx.\label{finite}%
\end{equation}
Therefore, the integral on the right of (\ref{finite}) is finite. It
immediately follows that
\[
\int_{\Sigma_{1}}\sum_{j+A\mathbb{Z}^{d}\in\mathbb{Z}^{d}/A\mathbb{Z}^{d}}\sum_{k\in\mathbb{Z}^{d}}\left\vert f\left(  x+Ak+j\right)  \right\vert ^{2}dt<\infty.
\]
Therefore, the sum $\sum_{j+A\mathbb{Z}^{d}\in\mathbb{Z}^{d}/A\mathbb{Z}^{d}}\sum_{k\in\mathbb{Z}^{d}}\left\vert f\left(  x+Ak+j\right)  \right\vert ^{2}$ is finite for almost
every $x\in\Sigma_{1}$ and a fixed $j$ which is a cross-section for $\mathbb{Z}^{d}/A\mathbb{Z}^{d}$ in $\mathbb{Z}^{d}.$ Next, observe that
\begin{equation}
\sum_{k\in\mathbb{Z}^{d}}f\left(  x+Ak+j\right)  e^{2\pi i\left\langle w,Ak\right\rangle
}\label{series}%
\end{equation}
is a Fourier series of the sequence $\left(  f\left(  x+Ak+j\right)  \right)
_{Ak\in A\mathbb{Z}^{d}}\in l^{2}\left(  A\mathbb{Z}^{d}\right)  .$ So, for almost every $x$ and for a fixed $j,$ the function
$\phi\left(  x,\cdot,j+A\mathbb{Z}^{d}\right)  $ is regarded as a function of $L^{2}\left(\mathbb{R}^{d}\right)  $ which is $A^{\star}\mathbb{Z}^{d}$-periodic (it is an $L^{2}\left(  \Sigma_{2}\right)  $ function). In
summary, we may regard the function $\left(  \mathcal{Z}f\right)  \left(x,w,j+A\mathbb{Z}^{d}\right)  $ as being defined over the set $\Sigma_{1}\times\Sigma_{2}\times\frac{\mathbb{Z}^{d}}{A\mathbb{Z}
^{d}}.$ Let us now show that $\mathcal{Z}$ maps $C_{c}\left(\mathbb{R}^{d}\right)  $ isometrically into the Hilbert space $L^{2}\left(  \Sigma
_{1}\times\Sigma_{2}\times\frac{\mathbb{Z}^{d}}{A\mathbb{Z}^{d}}\right).$ Given any square-summable function $f$ in $L^{2}\left(\mathbb{R}^{d}\right),$ we have
\[
\int_{\mathbb{R}^{d}}\left\vert f\left(  t\right)  \right\vert ^{2}dt=\int_{\Sigma_{1}}
\sum_{k\in\mathbb{Z}^{d}}\left\vert f\left(  t+k\right)  \right\vert ^{2}dt=\int_{\Sigma_{1}}
\sum_{j+A\mathbb{Z}^{d}\in\mathbb{Z}^{d}/A\mathbb{Z}^{d}}\sum_{k\in\mathbb{Z}^{d}}\left\vert f\left(  t+Ak+j\right)  \right\vert ^{2}dt.
\]
Regarding $\left(  f\left(  t+Ak+j\right)  \right)  _{Ak\in A\mathbb{Z}^{d}}$ as a square-summable sequence in $l^{2}\left(  A\mathbb{Z}^{d}\right),$ the function
\[
w\mapsto\sum_{k\in\mathbb{Z}^{d}}f\left(  t+Ak+j\right)  e^{2\pi i\left\langle w,Ak\right\rangle }
\]
is the Fourier transform of the sequence $\left(  f\left(  t+Ak+j\right)
\right)  _{Ak\in A\mathbb{Z}^{d}}$. The Plancherel theorem being a unitary operator, we have
\[
\sum_{k\in\mathbb{Z}^{d}}\left\vert f\left(  t+Ak+j\right)  \right\vert ^{2}=\int_{\Sigma_{2}}\left\vert \sum_{k\in\mathbb{Z}^{d}}f\left(  t+Ak+j\right)  e^{2\pi i\left\langle w,Ak\right\rangle
}\right\vert ^{2}dw.
\]
It follows that
\begin{align*}
\int_{\mathbb{R}^{d}}\left\vert f\left(  t\right)  \right\vert ^{2}dt &  =\int_{\Sigma_{1}%
}\sum_{j+A\mathbb{Z}^{d}\in\mathbb{Z}^{d}/A\mathbb{Z}^{d}}\int_{\Sigma_{2}}\left\vert \sum_{k\in\mathbb{Z}^{d}}f\left(  t+Ak+j\right)  e^{2\pi i\left\langle w,Ak\right\rangle
}\right\vert ^{2}dwdt\\&  =\int_{\Sigma_{1}}\int_{\Sigma_{2}}\sum_{j+A\mathbb{Z}^{d}\in\mathbb{Z}^{d}/A\mathbb{Z}^{d}}\left\vert \mathcal{Z}f\left(  x,w,j+A\mathbb{Z}^{d}\right)  \right\vert ^{2}dwdt\\&  =\left\Vert \mathcal{Z}f\right\Vert _{L^{2}\left(  \Sigma_{1}\times\Sigma_{2}\times\frac{\mathbb{Z}^{d}}{A\mathbb{Z}^{d}}\right)  }^{2}.
\end{align*}
Now, by density, we may extend the operator $\mathcal{Z}$ to $L^{2}\left(\mathbb{R}^{d}\right),$ and we shall next show that the extension
\[
\mathcal{Z}:L^{2}\left(\mathbb{R}^{d}\right)  \rightarrow L^{2}\left(  \Sigma_{1}\times\Sigma_{2}\times\frac{\mathbb{Z}^{d}}{A\mathbb{Z}^{d}}\right)
\]
is unitary. At this point, we only need to show that $\mathcal{Z}$ is
surjective. Let $\varphi$ be any vector in the Hilbert space $L^{2}\left(\Sigma_{1}\times\Sigma_{2}\times\frac{\mathbb{Z}^{d}}{A\mathbb{Z}^{d}}\right).$ Clearly for almost every $x$ and given any fixed $j,$ we have
$\varphi\left(  x,\cdot,j\right)  \in L^{2}\left(  \Sigma_{2}\right)  .$ For
such $x$ and $j,$ let $\left(  c_{\ell}\left(  x,j\right)  \right)  _{\ell\in
A\mathbb{Z}^{d}}$ be the Fourier transform of $\varphi\left(  x,\cdot,j\right)  .$ Next,
define $f_{\varphi}\in L^{2}\left(\mathbb{R}^{d}\right)  $ such that for almost every $x\in\Sigma_{1},$
\[
f_{\varphi}\left(  x+A\ell+j\right)  =c_{\ell}\left(  x,j\right)  .
\]
Now, for almost every $w\in\Sigma_{2},$
\begin{align*}
Zf_{\varphi}\left(  x,w\right)   &  =\sum_{\ell\in\mathbb{Z}^{d}}f_{\varphi}\left(  x+A\ell+j\right)  e^{2\pi i\left\langle w,A\ell
\right\rangle }\\
&  =\sum_{\ell\in\mathbb{Z}^{d}}c_{\ell}\left(  x,j\right)  e^{2\pi i\left\langle w,A\ell\right\rangle
}\\
&  =\varphi\left(  x,w,j+A\mathbb{Z}^{d}\right).
\end{align*}
It remains to show that our version of Zak transform intertwines the
representation $\pi$ with $\int_{\Sigma_{1}\times\Sigma_{2}}^{\oplus}%
\rho_{\left(  1,x,w\right)  }$ $dxdw$ such that $\rho_{\left(  1,x,w\right)
}$ is equivalent to the induced representation $\mathrm{Ind}_{\Gamma_{0}%
}^{\Gamma}\chi_{\left(  1,-x,w\right)  }$. Let $\mathcal{R}:l^{2}\left(
\Gamma/\Gamma_{0}\right)  \rightarrow l^{2}\left(\mathbb{Z}^{d}/A\mathbb{Z}^{d}\right)$ be a unitary map defined by
\[
\mathcal{R}\left(  f\left(  j+A\mathbb{Z}^{d}\right)  _{j+A\mathbb{Z}^{d}}\right)  =\left(  f\left(  T_{j}\Gamma_{0}\right)  \right)  _{T_{j}\Gamma_{0}}.
\]
Put
\[
\rho_{\left(  1,x,w\right)  }\left(  X\right)  =\mathcal{R}\circ
\mathrm{Ind}_{\Gamma_{0}}^{\Gamma}\chi_{\left(  1,-x,w\right)  }\left(
X\right)  \circ\mathcal{R}^{-1}\text{ for every }X\in\Gamma.
\]
It is straightforward to check that
\begin{align*}
\left(  \mathcal{Z}T_{j}f\right)  \left(  x,w,\cdot\right)   &  =\sum_{k\in\mathbb{Z}^{d}}T_{j}f\left(  x+\cdot\right)  e^{2\pi i\left\langle w,Ak\right\rangle }\\
&  =\sum_{k\in\mathbb{Z}^{d}}T_{j}f\left(  x+\left(  \cdot-j\right)  \right)  e^{2\pi i\left\langle
w,Ak\right\rangle }\\
&  =\left[  \rho_{\left(  1,x,w\right)  }\left(  T_{j}\right)  \right]
\left(  \mathcal{Z}f\right)  \left(  x,w,\cdot\right)
\end{align*}
and
\begin{align*}
\left(  \mathcal{Z}M_{Bl}f\right)  \left(  x,w,\cdot\right)   &  =\sum_{k\in\mathbb{Z}^{d}}e^{-2\pi i\left\langle Bl,x+Ak+j\right\rangle }f\left(  x+\cdot\right)
e^{2\pi i\left\langle w,Ak\right\rangle }\\
&  =e^{-2\pi i\left\langle Bl,x+j\right\rangle }\sum_{k\in\mathbb{Z}^{d}}e^{-2\pi i\left\langle Bl,Ak\right\rangle }f\left(  x+\cdot\right)
e^{2\pi i\left\langle w,Ak\right\rangle }\\
&  =e^{-2\pi i\left\langle Bl,x+j\right\rangle }f\left(  x+\cdot\right)
e^{2\pi i\left\langle w,Ak\right\rangle }\\
&  =e^{-2\pi i\left\langle Bl,x+j\right\rangle }\left(  \mathcal{Z}f\right)
\left(  x,w,\cdot\right)  \\
&  =\rho_{\left(  1,x,w\right)  }\left(  M_{Bl}\right)  \left(  \mathcal{Z}%
f\right)  \left(  x,w,\cdot\right)  .
\end{align*}
In summary, given any $X\in\Gamma,$
\[
\mathcal{Z\circ\pi}\left(  X\right)  \mathcal{\circ Z}^{-1}=\int_{\Sigma
_{1}\times\Sigma_{2}}^{\oplus}\rho_{\left(  1,x,w\right)  }\left(  X\right)
\text{ }dxdw.
\]
\end{remark}
\begin{lemma}
\label{hard} Let $\Gamma_{1}=A_{1}\mathbb{Z}^{d}$ and $\Gamma_{2}%
=A_{2}\mathbb{Z}^{d}$ be two lattices of $\mathbb{R}^{d}$ such that $A_{1}$
and $A_{2}$ are non-singular matrices and $\left\vert \det A_{1}\right\vert
\leq\left\vert \det A_{2}\right\vert .$ Then there exist measurable sets
$\Sigma_{1},\Sigma_{2}$ such that $\Sigma_{1}$ is a fundamental domain for
$\frac{\mathbb{R}^{d}}{A_{1}\mathbb{Z}^{d}}$ and $\Sigma_{2}$ is a fundamental
domain for $\frac{\mathbb{R}^{d}}{A_{2}\mathbb{Z}^{d}}$ and $\Sigma
_{1}\subseteq\Sigma_{2}\subset\mathbb{R}^{d}.$
\end{lemma}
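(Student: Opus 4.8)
The statement to prove is Lemma \ref{hard}: given two lattices $A_1\mathbb{Z}^d$ and $A_2\mathbb{Z}^d$ with $|\det A_1|\leq |\det A_2|$, there are fundamental domains $\Sigma_1\subseteq\Sigma_2$ for the respective lattices. The plan is to build $\Sigma_2$ first as an arbitrary measurable fundamental domain for $A_2\mathbb{Z}^d$, then carve a fundamental domain $\Sigma_1$ for the (finer or incomparable) lattice $A_1\mathbb{Z}^d$ \emph{inside} it by a measure-theoretic selection argument. The key point is that since $|\det A_1|\leq|\det A_2|$, the ``cell'' we need for $A_1\mathbb{Z}^d$ has volume at most that of $\Sigma_2$, so there is enough room.

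Concretely, I would proceed as follows. First fix any measurable fundamental domain $D_2$ for $\mathbb{R}^d/A_2\mathbb{Z}^d$ (e.g.\ $A_2[0,1)^d$) and any measurable fundamental domain $D_1$ for $\mathbb{R}^d/A_1\mathbb{Z}^d$. The group $A_1\mathbb{Z}^d$ tiles $\mathbb{R}^d$ by translates of $D_1$, so the translates $\{D_1 + \gamma : \gamma\in A_1\mathbb{Z}^d\}$ partition $\mathbb{R}^d$ (up to null sets); restricting this partition to $D_2$ gives a countable measurable partition $D_2 = \bigsqcup_{\gamma\in A_1\mathbb{Z}^d} (D_2\cap(D_1+\gamma))$. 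Translating each piece back by $-\gamma$, the sets $E_\gamma := (D_2\cap(D_1+\gamma)) - \gamma \subseteq D_1$ are pairwise disjoint measurable subsets of $D_1$, and their union is all of $D_1$ (again up to a null set), because every point of $\mathbb{R}^d$ lies in exactly one $A_2$-translate of $D_2$. Now I define $\Sigma_1 := \bigsqcup_{\gamma\in A_1\mathbb{Z}^d} E_\gamma \cap (\text{stuff})$ — actually the cleaner move: set $\Sigma_1 := \bigcup_{\gamma\in A_1\mathbb{Z}^d}\big((D_2\cap(D_1+\gamma))\big)$ is just $D_2$, so that is wrong; instead I want $\Sigma_1$ to be a set of representatives for $A_1\mathbb{Z}^d$ sitting inside $D_2$. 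The right construction: for each $A_1$-coset pick its unique representative inside $D_2$ — but cosets need not have representatives in $D_2$. This is exactly where $|\det A_1|\le|\det A_2|$ must enter: the map $D_2 \to \mathbb{R}^d/A_1\mathbb{Z}^d$ is measure-nonincreasing in the appropriate normalization and we must show it is essentially surjective.

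The honest approach is therefore: consider the quotient map $p_1:\mathbb{R}^d\to\mathbb{R}^d/A_1\mathbb{Z}^d$. Restricted to $D_2$, $p_1$ is a countable-to-one measurable map, and $p_1(D_2)$ covers $\mathbb{R}^d/A_1\mathbb{Z}^d$ up to measure zero precisely because $\mathrm{vol}(D_2)=|\det A_2|\geq|\det A_1|=\mathrm{vol}(\mathbb{R}^d/A_1\mathbb{Z}^d)$ and $D_2$ already surjects onto the torus $\mathbb{R}^d/A_2\mathbb{Z}^d$; more carefully, since $D_2$ meets every $A_2\mathbb{Z}^d$-orbit and $A_1\mathbb{Z}^d$-orbits are unions of $A_2\cap A_1$-translated pieces, one checks $p_1(D_2)=\mathbb{R}^d/A_1\mathbb{Z}^d$ outright (every $A_1$-orbit meets $D_2$). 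Then apply a measurable selection theorem (von Neumann / Jankov–von Neumann, or simply well-order $A_1\mathbb{Z}^d$ as $\gamma_0,\gamma_1,\dots$ and set $\Sigma_1 := \bigcup_{n}\big[(D_1+\gamma_n)\cap D_2 \setminus \bigcup_{m<n}(D_1+\gamma_m)\big] - \gamma_n$, which is an explicit measurable transversal): this yields a measurable $\Sigma_1\subseteq D_2$ containing exactly one point of each $A_1\mathbb{Z}^d$-orbit, i.e.\ a fundamental domain for $\mathbb{R}^d/A_1\mathbb{Z}^d$. Taking $\Sigma_2:=D_2$ finishes it.

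The main obstacle I expect is the essential surjectivity claim — showing that $D_2$, chosen as a fundamental domain for the \emph{coarser-volume} lattice $A_2\mathbb{Z}^d$, actually meets \emph{every} $A_1\mathbb{Z}^d$-orbit (not just enough of them by a volume count). When the lattices are nested ($A_1\mathbb{Z}^d\subseteq A_2\mathbb{Z}^d$, forcing $|\det A_1|\ge|\det A_2|$) this is false in general, so one must use the hypothesis $|\det A_1|\le|\det A_2|$ and the fact that $A_1\mathbb{Z}^d,A_2\mathbb{Z}^d$ are incomparable or that $A_1\mathbb{Z}^d$ is the \emph{finer} lattice. I would handle this by first passing to the common refinement $A_1\mathbb{Z}^d + A_2\mathbb{Z}^d$ if it is a lattice, or — more robustly — by the direct transversal construction above, which produces $\Sigma_1$ as a genuine fundamental domain automatically, with the inequality $|\det A_1|\le |\det A_2|$ guaranteeing $\mathrm{vol}(\Sigma_1)=|\det A_1|\le|\det A_2|=\mathrm{vol}(D_2)$ so that the carving is consistent (no orbit of $A_1\mathbb{Z}^d$ fails to be represented, since the leftover mass cannot be negative). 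The bookkeeping that this $\Sigma_1$ is measurable and hits each orbit exactly once is routine but must be written carefully.
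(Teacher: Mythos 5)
Your construction runs in the wrong order, and the obstacle you flag at the end --- that an arbitrary fundamental domain $D_2$ for $A_2\mathbb{Z}^d$ might fail to meet every $A_1\mathbb{Z}^d$-orbit --- is not a technicality you can talk your way past: it is genuinely fatal, and the volume inequality $\left\vert \det A_1\right\vert \leq \left\vert \det A_2\right\vert$ does not repair it. Concretely, take $d=1$, $A_1=1$, $A_2=\tfrac32$, and let $E=\bigcup_{n\in\mathbb{Z}}[n,n+\tfrac12)$. Since $E\cup(E+\tfrac32)=\mathbb{R}$, the translates of $E$ by $\tfrac32\mathbb{Z}$ cover $\mathbb{R}$, so the first-representative selection you describe produces a measurable fundamental domain $D_2\subseteq E$ for $\tfrac32\mathbb{Z}$. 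But every point of $D_2$ has fractional part in $[0,\tfrac12)$, so $D_2$ misses the entire orbit $x+\mathbb{Z}$ whenever $x\bmod 1\in[\tfrac12,1)$ --- a positive-measure family of orbits --- and therefore contains no fundamental domain for $\mathbb{Z}$, even up to null sets. The projection onto $\mathbb{R}^d/A_1\mathbb{Z}^d$ can be many-to-one over part of the torus and miss the rest, so ``the leftover mass cannot be negative'' is not an argument; essential surjectivity simply fails for a badly chosen $\Sigma_2$, even with rational, commensurable lattices. Hence you cannot fix $\Sigma_2$ first and carve $\Sigma_1$ out of it.

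The paper avoids this by building $\Sigma_1$ first, with a constraint tying it to the \emph{second} lattice: it invokes the theorem of Han and Wang (Theorem $1.2$ of \cite{Han}), which produces a measurable set $\Sigma_1$ that simultaneously tiles $\mathbb{R}^d$ by $A_1\mathbb{Z}^d$ and packs $\mathbb{R}^d$ by $A_2\mathbb{Z}^d$; this is where the hypothesis $\left\vert \det A_1\right\vert \leq \left\vert \det A_2\right\vert$ actually enters, and it is a nontrivial theorem, not a selection argument. Once such a $\Sigma_1$ is in hand, the remaining step is the easy one: take any fundamental domain $\Omega$ for $A_2\mathbb{Z}^d$, translate the pieces $(\Omega+k)\cap\Sigma_1$ back into $\Omega$ (the packing property is exactly what makes these translated pieces pairwise disjoint), delete them from $\Omega$, and adjoin $\Sigma_1$ to obtain $\Sigma_2\supseteq\Sigma_1$. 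Your transversal machinery is adequate for that second step, but the substance of the lemma is the existence of the common tiling--packing set, which your proposal neither proves nor cites.
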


\begin{proof}
According to Theorem $1.2,$ \cite{Han}, there exists a measurable set
$\Sigma_{1}$ such that $\Sigma_{1}$ tiles $\mathbb{R}^{d}$ by the lattice
$A_{1}\mathbb{Z}^{d}$ and packs $\mathbb{R}^{d}$ by $A_{2}\mathbb{Z}^{d}$. By
packing, we mean that given any distinct $\gamma,\kappa\in A_{2}\mathbb{Z}%
^{d}$, the set $\left(  \Sigma_{1}+\gamma\right)  \cap\left(  \Sigma
_{1}+\kappa\right)  $ is an empty set and $\sum_{\lambda\in A_{2}%
\mathbb{Z}^{d}}1_{\Sigma_{1}}\left(  x+\lambda\right)  \leq1\text{ for }%
x\in\mathbb{R}^{d}$ where $1_{\Sigma_{1}}$ denotes the characteristic function
of the set $\Sigma_{1}.$ We would like to construct a set $\Sigma_{2}$ which
tiles $\mathbb{R}^{d}$ by $A_{2}\mathbb{Z}^{d}$ such that $\Sigma_{1}%
\subseteq\Sigma_{2}.$ To construct such a set, let $\Omega$ be a fundamental
domain for $\frac{\mathbb{R}^{d}}{A_{2}\mathbb{Z}^{d}}.$ It follows that,
there exists a subset $I$ of $A_{2}\mathbb{Z}^{d}$ such that $\Sigma
_{1}\subseteq\text{ }\overset{\cdot}{{\displaystyle\bigcup\limits_{k\in I}}%
}\left(  \Omega+k\right) $ and each $\left(  \Omega+k\right)  \cap\Sigma_{1}$
is a set of positive Lebesgue measure. Next, for each $k\in I$, we define
$\Omega_{k}=\left(  \Omega+k\right)  \cap\Sigma_{1}.$ We observe that
$\Sigma_{1}=\text{ }\overset{\cdot}{{\displaystyle\bigcup\limits_{k\in I}}%
}\left(  \left(  \Omega+k\right)  \cap\Sigma_{1}\right)  =\text{ }%
\overset{\cdot}{{\displaystyle\bigcup\limits_{k\in I}}}\Omega_{k}$ where
$\Omega_{k}=(\Omega+k)\cap\Sigma_{1}.$ Put $$\Sigma_{2}=\left(  \Omega-\text{
}\overset{\cdot}{{\displaystyle\bigcup\limits_{k\in I}}}\left(  \Omega
_{k}-k\right)  \right)  \overset{\cdot}{\cup}\Sigma_{1}.$$ The disjoint union
in the equality above is due to the fact that for distinct $k,j\in I,$ the set
$\left(  \Omega_{k}-k\right)  \cap\left(  \Omega_{j}-j\right)  $ is a null
set. This holds because, $\Sigma_{1}$ packs $\mathbb{R}^{d}$ by $A_{2}%
\mathbb{Z}^{d}.$ Finally, we observe that $$\Sigma_{2}=\left(  \Omega-\text{
}\overset{\cdot}{{\displaystyle\bigcup\limits_{k\in I}} }\left(  \Omega
_{k}-k\right)  \right)  \overset{\cdot}{\cup}\left(  \overset{\cdot
}{{\displaystyle\bigcup\limits_{k\in I}}}\Omega_{k}\right) $$ and
$\Omega=\left(  \Omega-\text{ }\overset{\cdot}{{\displaystyle\bigcup
\limits_{k\in I}} }\left(  \Omega_{k}-k\right)  \right)  \overset{\cdot}{\cup
}\left(  \overset{\cdot}{{\displaystyle\bigcup\limits_{k\in I}}}\Omega
_{k}^{\prime}\right) $ where each $\Omega_{k}^{\prime}$ is $A_{2}%
\mathbb{Z}^{d}$-congruent with $\Omega_{k}.$ Therefore $\Sigma_{2}$ is a
fundamental domain for $\frac{\mathbb{R}^{d}}{A_{2}\mathbb{Z}^{d}}$ which
contains $\Sigma_{1}.$ This completes the proof.
\end{proof}

Now, we are ready to prove Proposition \ref{pi}. Part of the proof of
Proposition \ref{pi} relies on some technical facts related to central
decompositions of unitary representations. A good presentation of this theory
is found in Section $3.4.2,$ \cite{hartmut}.

\subsection{Proof of Proposition \ref{pi}}

From Proposition \ref{operator}, we know that the representation $\pi$ is
unitarily equivalent to
\begin{equation}
\int_{\frac{\mathbb{R}^{d}}{\mathbb{Z}^{d}}\times\frac{\mathbb{R}^{d}%
}{A^{\star}\mathbb{Z}^{d}}}^{\oplus}\mathrm{Ind}_{\Gamma_{0}}^{\Gamma}%
\chi_{\left(  1,\sigma\right)  }\text{ }d\sigma.
\end{equation}
We recall that $\Gamma_{0}$ is isomorphic to the discrete group $\mathbb{Z}%
_{m}\times B\mathbb{Z}^{d}\times A\mathbb{Z}^{d}$ and that $\Gamma_{1}$ is
isomorphic to $\mathbb{Z}_{m}\times B\mathbb{Z}^{d}$ where $m$ is the number
of elements in the commutator group of $\Gamma$ which is a discrete subgroup
of the torus. From Proposition \ref{L}, we have
\begin{equation}
L\simeq{\displaystyle\oplus}_{k=0}^{m-1}\int_{\frac{\mathbb{R}^{d}}{B^{\star
}\mathbb{Z}^{d}}\times\frac{\mathbb{R}^{d}}{A^{\star}\mathbb{Z}^{d}}}^{\oplus
}\mathrm{Ind}_{\Gamma_{0}}^{\Gamma}\left(  \chi_{\left(  k,\sigma\right)
}\right)  \text{ }d\sigma. \label{decL}%
\end{equation}
Now, put
\begin{equation}
L_{k}=\int_{\frac{\mathbb{R}^{d}}{B^{\star}\mathbb{Z}^{d}}\times
\frac{\mathbb{R}^{d}}{A^{\star}\mathbb{Z}^{d}}}^{\oplus}\mathrm{Ind}%
_{\Gamma_{0}}^{\Gamma}\chi_{\left(  k,\sigma\right)  }\text{ }d\sigma.
\end{equation}
From (\ref{decL}), it is clear that $L=L_{0}\oplus\cdots\oplus L_{m-1}.$ Next,
for distinct $i$ and $j,$ the representations $L_{i}$ and $L_{j}$ described
above are disjoint representations. This is due to the fact that if $i\neq j$
then the $\Gamma$-orbits of $\chi_{\left(  i,\sigma\right)  }$ and
$\chi_{\left(  j,\sigma\right)  }$ are disjoint sets and therefore the induced
representations $\mathrm{Ind}_{\Gamma_{0}}^{\Gamma}\chi_{\left(
i,\sigma\right)  }$ and $\mathrm{Ind}_{\Gamma_{0}}^{\Gamma}\chi_{\left(
j,\sigma\right)  }$ are disjoint representations. Thus, for $k\neq1$ the representation $L_k$ must be disjoint from $\pi.$ Let us assume for now that $\left\vert \det B\right\vert >1$ (or $\left\vert \det\left(  B^{\star
}\right)  \right\vert <1$.) According to Lemma \ref{hard}, there exist
measurable cross-sections $\Sigma_{1},\Sigma_{2}$ for $\frac{\mathbb{R}^{d}%
}{\mathbb{Z}^{d}}\times\frac{\mathbb{R}^{d}}{A^{\star}\mathbb{Z}^{d}}$ and
$\frac{\mathbb{R}^{d}}{B^{\star}\mathbb{Z}^{d}}\times\frac{\mathbb{R}^{d}%
}{A^{\star}\mathbb{Z}^{d}}$ respectively such that $\Sigma_{1},\Sigma
_{2}\subset\mathbb{R}^{2},$ $\Sigma_{1}\supset\Sigma_{2}$ and $\Sigma
_{1}-\Sigma_{2}$ is a set of positive Lebesgue measure. Therefore,
\begin{equation}
\pi\simeq\int_{\Sigma_{1}}^{\oplus}\left(  \mathrm{Ind}_{\Gamma_{0}}^{\Gamma
}\left(  \chi_{\left(  1,\sigma\right)  }\right)  \right)  \text{ }%
d\sigma\text{ and }L_{1}\simeq\int_{\Sigma_{2}}^{\oplus}\left(  \mathrm{Ind}%
_{\Gamma_{0}}^{\Gamma}\left(  \chi_{\left(  1,\sigma\right)  }\right)
\right)  \text{ }d\sigma\label{irreducible}%
\end{equation}
and the representations above are realized as acting in the direct integrals
of finite dimensional vector spaces: $\int_{\Sigma_{1}}^{\oplus}l^{2}\left(
\Gamma/\Gamma_{0}\right)  \text{ }d\sigma$ and $\int_{\Sigma_{2}}^{\oplus
}l^{2}\left(  \Gamma/\Gamma_{0}\right)  \text{ }d\sigma$ respectively. We
remark that the direct integrals described in (\ref{irreducible}) are
irreducible decompositions of $\pi$ and $L_{1}$. Now, referring to the central
decomposition of the left regular representation which is described in
(\ref{Left}) there exists a measurable subset $\mathbf{E}$ of $\Sigma_{2}$
such that the central decomposition of $L_{1}$ is given by (see Theorem
$3.26,$ \cite{hartmut})
\[
\int_{\mathbf{E}}^{\oplus}\oplus_{k=1}^{\dim\left(  l^{2}\left(  \Gamma
/\Gamma_{0}\right)  \right)  }\mathrm{Ind}_{\Gamma_{0}}^{\Gamma}\left(
\chi_{\left(  1,\sigma\right)  }\right)  \text{ }d\sigma.
\]
Furthermore, recalling that $L_{1}\simeq\int_{\Sigma_{2}}^{\oplus}\left(
\mathrm{Ind}_{\Gamma_{0}}^{\Gamma}\left(  \chi_{\left(  1,\sigma\right)
}\right)  \right)  \text{ }d\sigma$ and letting $\mu$ be the Lebesgue measure
on $\mathbb{R}^{d}\times\mathbb{R}^{d},$ it is necessarily the case that
\[
\mu\left(  \mathbf{E}\right)  =\frac{\mu\left(  \Sigma_{2}\right)  }%
{\dim\left(  l^{2}\left(  \Gamma/\Gamma_{0}\right)  \right)  }=\frac
{1}{\left\vert \det\left(  B\right)  \det\left(  A\right)  \right\vert
\dim\left(  l^{2}\left(  \Gamma/\Gamma_{0}\right)  \right)  }.
\]
From the discussion provided at the beginning of the third section, the set
$\mathbf{E\ }$is obtained by taking a cross-section for the $\Gamma$-orbits in
$\frac{\mathbb{R}^{d}}{B^{\star}\mathbb{Z}^{d}}\times\frac{\mathbb{R}^{d}%
}{A^{\star}\mathbb{Z}^{d}}.$ Moreover, since $\Sigma_{1}\supset\Sigma_{2}$ and
since $\Sigma_{1}-\Sigma_{2}$ is a set of positive Lebesgue measure then
$\pi\simeq\int_{\Sigma_{1}}^{\oplus}\mathrm{Ind}_{\Gamma_{0}}^{\Gamma}\left(
\chi_{\left(  1,\sigma\right)  }\right)  \text{ }d\sigma\simeq L_{1}\oplus
\int_{\Sigma_{1}-\Sigma_{2}}^{\oplus}\mathrm{Ind}_{\Gamma_{0}}^{\Gamma}\left(
\chi_{\left(  1,\sigma\right)  }\right)  \text{ }d\sigma$ and $\int
_{\Sigma_{1}-\Sigma_{2}}^{\oplus}\mathrm{Ind}_{\Gamma_{0}}^{\Gamma}\left(
\chi_{\left(  1,\sigma\right)  }\right)  $ $d\sigma$ is a subrepresentation of
$\pi.$ Thus a central decomposition of $\pi$ is given by $$\int_{\mathbf{E}%
}^{\oplus}\oplus_{k=1}^{u\left(  \sigma\right)  \dim\left(  l^{2}\left(
\Gamma/\Gamma_{0}\right)  \right)  }\mathrm{Ind}_{\Gamma_{0}}^{\Gamma}\left(
\chi_{\left(  1,\sigma\right)  }\right)  \text{ }d\sigma$$ and the function
$u:\mathbf{E}\rightarrow\mathbb{N}$ is greater than one on a subset of
positive measure of $\mathbf{E}.$ Therefore, according to Theorem $3.26,$
\cite{hartmut}, it is not possible for $\pi$ to be equivalent to a
subrepresentation of the left regular representation of $\Gamma$ if
$\left\vert \det B\right\vert >1.$ Now, let us suppose that $\left\vert \det
B\right\vert \leq1.$ Then $\left\vert \det B^{\star}\right\vert \geq1.$
Appealing to Lemma \ref{hard}, there exist measurable sets $\Sigma_{1}$ and
$\Sigma_{2}$ which are measurable fundamental domains for $\frac
{\mathbb{R}^{d}}{\mathbb{Z}^{d}}\times\frac{\mathbb{R}^{d}}{A^{\star
}\mathbb{Z}^{d}},\text{ and }\frac{\mathbb{R}^{d}}{B^{\star}\mathbb{Z}^{d}%
}\times\frac{\mathbb{R}^{d}}{A^{\star}\mathbb{Z}^{d}}$ respectively, such that
$\Sigma_{1},\Sigma_{2}\subset\mathbb{R}^{d}$ and $\Sigma_{1}\subseteq
\Sigma_{2}$. Next,
\begin{align*}
L_{1}  &  \simeq\int_{\Sigma_{2}}^{\oplus}\mathrm{Ind}_{\Gamma_{0}}^{\Gamma
}\left(  \chi_{\left(  1,\sigma\right)  }\right)  \text{ }d\sigma\\
&  \simeq\left(  \int_{\Sigma_{1}}^{\oplus}\mathrm{Ind}_{\Gamma_{0}}^{\Gamma
}\left(  \chi_{\left(  1,\sigma\right)  }\right)  \text{ }d\sigma\right)
\oplus\left(  \int_{\Sigma_{2}-\Sigma_{1}}^{\oplus}\mathrm{Ind}_{\Gamma_{0}%
}^{\Gamma}\left(  \chi_{\left(  1,\sigma\right)  }\right)  \text{ }%
d\sigma\right) \\
&  \simeq\pi\oplus\left(  \int_{\Sigma_{2}-\Sigma_{1}}^{\oplus}\mathrm{Ind}%
_{\Gamma_{0}}^{\Gamma}\left(  \chi_{\left(  1,\sigma\right)  }\right)  \text{
}d\sigma\right)  .
\end{align*}
Finally, $\pi$ is equivalent to a subrepresentation of $L_{1}$ and is
equivalent to a subrepresentation of the left regular representation $L.$

\subsection{Examples}

In this subsection, we shall present a few examples to illustrate the results
obtained in Propositions \ref{L}, \ref{operator} and \ref{pi}.

\begin{enumerate}
\item Let us start with a trivial example. Let $d=1$ and $B=\frac{2}{3}.$ Then
$B^{\star}=\frac{3}{2},A=3,\text{ and }A^{\star}=\frac{1}{3}.$ Next,
$L\simeq\oplus_{k=0}^{2}\int_{\left[  0,\frac{3}{2}\right)  \times\left[
0,\frac{1}{3}\right)  }^{\oplus}\mathrm{Ind}_{\Gamma_{0}}^{\Gamma}%
\chi_{\left(  k,\sigma\right)  }\text{ }d\sigma$ and $\pi\simeq\int_{\left[
0,1\right)  \times\left[  0,\frac{1}{3}\right)  }^{\oplus}\mathrm{Ind}%
_{\Gamma_{0}}^{\Gamma}\chi_{\left(  1,\sigma\right)  }\text{ }d\sigma.$ Now,
the central decomposition of $L_{1}$ is given by $$\int_{\left[  0,\frac{1}%
{2}\right)  \times\left[  0,\frac{1}{3}\right)  }^{\oplus}\oplus_{j=1}%
^{3}\mathrm{Ind}_{\Gamma_{0}}^{\Gamma}\chi_{\left(  1,\sigma\right)  }\text{
}d\sigma$$ and the central decomposition of the rational Gabor representation
$\pi$ is $$\int_{\left[  0,\frac{1}{2}\right)  \times\left[  0,\frac{1}%
{3}\right)  }^{\oplus}\oplus_{j=1}^{2}\mathrm{Ind}_{\Gamma_{0}}^{\Gamma}%
\chi_{\left(  1,\sigma\right)  }\text{ }d\sigma.$$ From these decompositions,
it is obvious that the rational Gabor representation $\pi$ is equivalent to a
subrepresentation of the left regular representation $L.$

\item If we define $B=\left[
\begin{array}
[c]{cc}%
\frac{2}{3} & 0\\
0 & \frac{3}{2}%
\end{array}
\right] ,$ then $B^{\star}=\left[
\begin{array}
[c]{cc}%
\frac{3}{2} & 0\\
0 & \frac{2}{3}%
\end{array}
\right]  \text{, }A=\left[
\begin{array}
[c]{cc}%
3 & 0\\
0 & 2
\end{array}
\right]  \text{ and }A^{\star}=\left[
\begin{array}
[c]{cc}%
\frac{1}{3} & 0\\
0 & \frac{1}{2}%
\end{array}
\right] .$ Next, the left regular representation of $\Gamma$ can be decomposed
into a direct integral of representations as follows: $$L\simeq\oplus_{k=0}%
^{5}\int_{\mathbf{S}}^{\oplus}\mathrm{Ind}_{\Gamma_{0}}^{\Gamma}\chi_{\left(
k,\sigma\right)  }\text{ }d\sigma$$ where $\mathbf{S=S}_{1}\times A^{\star
}\left[  0,1\right)  ^{2}$ and
\[
\mathbf{S}_{1}=\left(  \left[  0,1\right)  \times\left[  0,\frac{2}{3}\right)
\right)  \cup\left(  \left[  1,\frac{3}{2}\right)  \times\left[  \frac{2}%
{3},1\right)  \right)  \cup\left(  \left[  -\frac{1}{2},0\right)
\times\left[  -\frac{1}{3},0\right)  \right)
\]
is a common connected fundamental domain for the lattices $B^{\star}%
\mathbb{Z}^{2}$ and $\mathbb{Z}^{2}.$

\begin{figure}[h!]
  \caption{Illustration of the set $\mathbf{S}_{1}.$}
  \centering
    \includegraphics[width=0.5\textwidth]{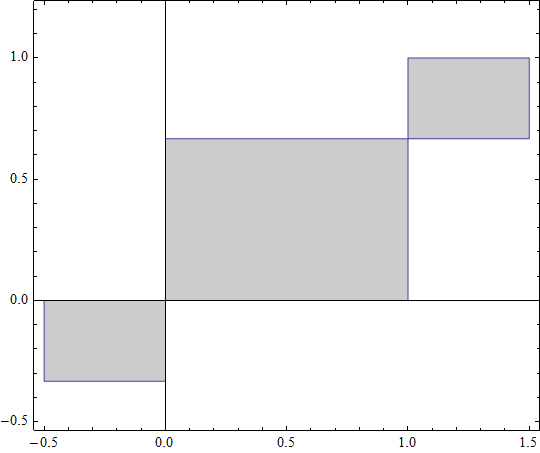}
\end{figure}

\noindent Moreover, we decompose the rational Gabor representation as follows:
$\pi\simeq\int_{\mathbf{S}}^{\oplus}\mathrm{Ind}_{\Gamma_{0}}^{\Gamma}%
\chi_{\left(  1,\sigma\right)  }\text{ }d\sigma.$ One interesting fact to
notice here is that: the rational Gabor representation $\pi$ is actually
equivalent to $L_{1}$ and $$L=L_{0}\oplus L_{1}\oplus L_{2}\oplus L_{3}\oplus
L_{4}\oplus L_{5}.$$

\item Let $\Gamma=\left\langle T_{k},M_{Bl}:k,l\in\mathbb{Z}^{3}\right\rangle
$ where $B=\left[
\begin{array}
[c]{ccc}%
1 & 0 & 0\\
-\frac{1}{5} & \frac{1}{5} & 0\\
1 & -1 & 5
\end{array}
\right] .$ The inverse transpose of the matrix $B$ is given by $B^{\star
}=\left[
\begin{array}
[c]{ccc}%
1 & 1 & 0\\
0 & 5 & 1\\
0 & 0 & \frac{1}{5}%
\end{array}
\right] .$ Next, we may choose the matrix $A$ such that
\[
A=\left[
\begin{array}
[c]{ccc}%
1 & 1 & 0\\
0 & 5 & 5\\
0 & 0 & 1
\end{array}
\right]  \text{ and }A^{\star}=\left[
\begin{array}
[c]{ccc}%
1 & 0 & 0\\
-\frac{1}{5} & \frac{1}{5} & 0\\
1 & -1 & 1
\end{array}
\right]  .
\]
Finally, we observe that $\left[
\begin{array}
[c]{ccc}%
0 & 0 & 1\\
0 & 1 & 5\\
1 & \frac{1}{5} & 0
\end{array}
\right]  \left[  0,1\right)  ^{3}$ is a common fundamental domain for both
$B^{\star}\mathbb{Z}^{3}$ and $\mathbb{Z}^{3}.$ Put
\[
\mathbf{S}=\left[
\begin{array}
[c]{ccc}%
0 & 0 & 1\\
0 & 1 & 5\\
1 & \frac{1}{5} & 0
\end{array}
\right]  \left[  0,1\right)  ^{3}\times\left[
\begin{array}
[c]{ccc}%
1 & 0 & 0\\
-\frac{1}{5} & \frac{1}{5} & 0\\
1 & -1 & 1
\end{array}
\right]  \left[  0,1\right)  ^{3}.
\]
Then $L\simeq\oplus_{k=0}^{4}\int_{\mathbf{S}}^{\oplus}\mathrm{Ind}%
_{\Gamma_{0}}^{\Gamma}\chi_{\left(  k,t\right)  }dt$ and $\pi\simeq
\int_{\mathbf{S}}^{\oplus}\mathrm{Ind}_{\Gamma_{0}}^{\Gamma}\chi_{\left(
1,t\right)  }dt.$
\end{enumerate}

\section{Application to time-frequency analysis}

Let $\pi$ be a unitary representation of a locally compact group $X,$ acting
in some Hilbert space $\mathcal{H}.$ We say that $\pi$ is admissible, if and
only if there exists some vector $\phi\in\mathcal{H}$ such that the operator
$W_{\phi}^{\pi}$ defined by $W_{\phi}^{\pi}:\mathcal{H}\rightarrow
L^{2}\left(  X\right)  ,\text{ }W_{\phi}^{\pi}\psi\left(  x\right)
=\left\langle \psi,\pi\left(  x\right)  \phi\right\rangle $ is an isometry of
$\mathcal{H}$ into $L^{2}\left(  X\right)  .$ 

We continue to assume that $B$ is an invertible rational matrix with at least
one entry which is not an integer. Following Proposition $2.14$ and Theorem
$2.42$ of \cite{hartmut}, the following is immediate. 

\begin{lemma}
\label{admissible} A representation of $\Gamma$ is admissible if and only if
the representation is equivalent to a subrepresentation of the left regular
representation of $\Gamma.$
\end{lemma}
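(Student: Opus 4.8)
The plan is to exploit the fact that $\Gamma$ carries the discrete topology, so that $L^{2}(\Gamma)=\ell^{2}(\Gamma)$ with counting measure and $L(y)f(x)=f(y^{-1}x)$, and to reduce the lemma to the elementary observation that a vector $\phi$ is admissible for $\pi$ precisely when the coefficient map $W_{\phi}^{\pi}$ is an isometric intertwiner of $\pi$ into $L$. In other words, the whole content is two short computations, one for each implication; everything works because $\Gamma$ is discrete (hence unimodular) and there are no measure-theoretic subtleties.

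For the forward direction, suppose $\pi$ acts in $\mathcal{H}$ and $\phi\in\mathcal{H}$ is an admissible vector, so that $W_{\phi}^{\pi}\colon\mathcal{H}\to\ell^{2}(\Gamma)$, $W_{\phi}^{\pi}\psi(x)=\langle\psi,\pi(x)\phi\rangle$, is an isometry. Using unitarity of $\pi$ one computes $W_{\phi}^{\pi}(\pi(y)\psi)(x)=\langle\pi(y)\psi,\pi(x)\phi\rangle=\langle\psi,\pi(y^{-1}x)\phi\rangle=(W_{\phi}^{\pi}\psi)(y^{-1}x)=(L(y)W_{\phi}^{\pi}\psi)(x)$, so $W_{\phi}^{\pi}$ intertwines $\pi$ with $L$. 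Being an isometry, its range is a closed $L$-invariant subspace of $\ell^{2}(\Gamma)$ on which $L$ restricts to a copy of $\pi$; hence $\pi$ is equivalent to a subrepresentation of $L$.

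For the converse, we may assume $\pi=L|_{\mathcal{M}}$ for some closed $L$-invariant subspace $\mathcal{M}\subseteq\ell^{2}(\Gamma)$. Let $P$ be the orthogonal projection onto $\mathcal{M}$; since $\mathcal{M}$ is $L$-invariant and $L$ is unitary, $P$ lies in the commutant of $L(\Gamma)$. Set $\phi=P\delta_{e}\in\mathcal{M}$, where $\delta_{e}$ denotes the indicator function of the identity. Then for $\psi\in\mathcal{M}$ we get $W_{\phi}^{\pi}\psi(x)=\langle\psi,L(x)P\delta_{e}\rangle=\langle\psi,PL(x)\delta_{e}\rangle=\langle P\psi,\delta_{x}\rangle=\langle\psi,\delta_{x}\rangle=\psi(x)$, so $W_{\phi}^{\pi}$ is just the inclusion $\mathcal{M}\hookrightarrow\ell^{2}(\Gamma)$, an isometry. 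Thus $\phi$ is an admissible vector for $\pi$.

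There is no genuine obstacle here; the only points deserving a word of care are that the admissible vector in the converse must actually lie in the invariant subspace $\mathcal{M}$ (which is why one projects $\delta_{e}$ rather than using it directly) and that $P$ commutes with $L(\Gamma)$. Alternatively, and more in the spirit of the surrounding text, one simply invokes Proposition $2.14$ and Theorem $2.42$ of \cite{hartmut}: $\Gamma$ is unimodular (being discrete) and type $I$, and the only delicate hypothesis in the general unimodular characterization of admissibility is the discrete-series/square-integrability condition, which is automatic for a discrete group.
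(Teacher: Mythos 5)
Your proof is correct. The paper itself gives no argument at all: it simply declares the lemma ``immediate'' from Proposition $2.14$ and Theorem $2.42$ of F\"uhr's book, which characterize admissibility for (discrete, type $I$, unimodular) groups. You instead supply a self-contained two-line argument in each direction, and both computations check out: the coefficient map $W_{\phi}^{\pi}$ is always an intertwiner into $L$, so an isometric $W_{\phi}^{\pi}$ exhibits $\pi$ as (equivalent to) the restriction of $L$ to its closed invariant range; conversely, for a closed invariant subspace $\mathcal{M}\subseteq\ell^{2}(\Gamma)$ the projected delta $P\delta_{e}$ is admissible because $W_{P\delta_{e}}^{L|_{\mathcal{M}}}$ reduces to the inclusion $\mathcal{M}\hookrightarrow\ell^{2}(\Gamma)$. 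Your argument isolates exactly why discreteness matters --- $\delta_{e}$ is already an admissible vector for $L$ itself, so admissibility passes to every subrepresentation --- whereas the cited general theorems are needed precisely because this fails for nondiscrete unimodular groups. The only point you gloss over, harmlessly, is the reduction ``we may assume $\pi=L|_{\mathcal{M}}$'': admissibility is transported along the unitary equivalence $U$ by replacing the admissible vector $a$ with $U^{-1}a$, which is a one-line verification. What your route buys is independence from the machinery of \cite{hartmut}; what the paper's citation buys is brevity and consistency with the framework it uses elsewhere (e.g.\ in the proof of Proposition \ref{ratmatrix}). Either is acceptable.
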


Given a countable sequence $\left\{  f_{i}\right\}  _{i\in I}$ of vectors in a
Hilbert space $\mathbf{H},$ we say $\left\{  f_{i}\right\}  _{i\in I}$ forms a
frame if and only if there exist strictly positive real numbers $A,B$ such
that for any vector $f\in\mathbf{H}$,
\[
A\left\Vert f\right\Vert ^{2}\leq\sum_{i\in I}\left\vert \left\langle
f,f_{i}\right\rangle \right\vert ^{2}\leq B\left\Vert f\right\Vert ^{2}.
\]
In the case where $A=B$, the sequence of vectors $\left\{  f_{i}\right\}
_{i\in I}$ forms a tight frame, and if $A=B=1,$ $\left\{  f_{i}\right\}
_{i\in I}$ is called a Parseval frame. We remark that an admissible vector for
the left regular representation of $\Gamma$ is a Parseval frame by definition.

The following proposition is well-known for the more general case where $B$ is
any invertible matrix (not necessarily a rational matrix.) Although this
result is not new, the proof of Proposition \ref{ratmatrix} is new, and worth
presenting in our opinion.

\begin{proposition}
\label{ratmatrix}Let $B$ be a rational matrix. There exists a vector $g\in
L^{2}\left(  \mathbb{R}^{d}\right)  $ such that the system $\left\{
M_{l}T_{k}g:l\in B\mathbb{Z}^{d},k\in\mathbb{Z}^{d}\right\}  $ is a Parseval
frame in $L^{2}\left(  \mathbb{R}^{d}\right)  $ if and only if $\left\vert
\det B\right\vert \leq1.$
\end{proposition}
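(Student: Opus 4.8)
The plan is to reduce the statement directly to the machinery already developed in Propositions \ref{operator} and \ref{pi} together with Lemma \ref{admissible}. Recall from Lemma \ref{admissible} that a representation of $\Gamma$ is admissible precisely when it embeds as a subrepresentation of the left regular representation $L$. Since the existence of a vector $g$ with $\{M_lT_kg : l\in B\mathbb{Z}^d, k\in\mathbb{Z}^d\}$ a Parseval frame is exactly the statement that the Gabor representation $\pi$ is admissible (the analysis operator $W_g^\pi$ being an isometry is the Parseval frame condition, once one identifies $L^2(\Gamma)$ correctly — here one must be slightly careful, since $\Gamma$ carries the extra central $\mathbb{Z}_m$ factor, so $\pi$ acting on $L^2(\mathbb{R}^d)$ must be matched against the left regular representation of $(\mathbb{Z}_m\times B\mathbb{Z}^d)\rtimes\mathbb{Z}^d$, and admissibility of $\pi$ as a representation of this group is what the frame property encodes). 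So the proposition is equivalent to: $\pi$ is a subrepresentation of $L$ if and only if $|\det B|\le 1$.

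The forward and backward implications then both come essentially for free from Proposition \ref{pi}. If $|\det B|\le 1$, Proposition \ref{pi} states that $\pi$ is equivalent to a subrepresentation of $L_1$, which is itself a subrepresentation of $L$; hence $\pi$ is admissible, and unwinding Lemma \ref{admissible} produces the desired vector $g$. Conversely, if $|\det B|>1$, Proposition \ref{pi} (more precisely the discussion of multiplicity functions in its proof) shows that $\pi$ is \emph{not} equivalent to a subrepresentation of $L$: the central decomposition of $\pi$ has multiplicity function $u(\sigma)\dim(l^2(\Gamma/\Gamma_0))$ with $u>1$ on a set of positive measure, which by Theorem $3.26$ of \cite{hartmut} is incompatible with being a subrepresentation of $L$, whose central decomposition has multiplicity exactly $\dim(l^2(\Gamma/\Gamma_0))$. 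By Lemma \ref{admissible}, $\pi$ is then not admissible, so no such $g$ exists.

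The one genuinely delicate point — and the step I expect to require the most care — is the bookkeeping identifying "Parseval frame $\{M_lT_kg\}$ in $L^2(\mathbb{R}^d)$" with "admissibility of $\pi$". The subtlety is that the indexing set of the Gabor system is $B\mathbb{Z}^d\times\mathbb{Z}^d$, which is \emph{not} the full group $\Gamma\cong(\mathbb{Z}_m\times B\mathbb{Z}^d)\rtimes\mathbb{Z}^d$; the commutator/center factor $\mathbb{Z}_m$ is missing. One must observe that for any $\tau\in[\Gamma,\Gamma]$, $|\langle f,\pi(\tau)\pi(l,k)g\rangle| = |\langle f,\pi(l,k)g\rangle|$, so summing $\sum_{\tau,l,k}|\langle f,\pi(\tau,l,k)g\rangle|^2$ over the whole group equals $m\sum_{l,k}|\langle f,M_lT_kg\rangle|^2$. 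Thus $\{M_lT_kg\}$ is a Parseval frame in $L^2(\mathbb{R}^d)$ if and only if $\{\pi(\tau,l,k)(m^{-1/2}g)\}_{(\tau,l,k)\in\Gamma}$ is a Parseval frame, i.e. if and only if $m^{-1/2}g$ is an admissible vector for $\pi$ viewed as a representation of $\Gamma$. After this identification the rest is a direct invocation of the two cited propositions, so the proof is short.

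Concretely, I would write: \emph{Proof.} By the remark preceding the statement and Lemma \ref{admissible}, it suffices to show $\pi$ is a subrepresentation of $L$ iff $|\det B|\le 1$, after noting that $\{M_lT_kg\}$ is Parseval iff $m^{-1/2}g$ is admissible for $\pi$. If $|\det B|\le 1$, Proposition \ref{pi} gives $\pi\hookrightarrow L_1\hookrightarrow L$, so $\pi$ is admissible and $g$ exists. If $|\det B|>1$, the central-decomposition computation in the proof of Proposition \ref{pi} shows the multiplicity function of $\pi$ exceeds that of $L$ on a positive-measure set, so by Theorem $3.26$ of \cite{hartmut}, $\pi$ is not a subrepresentation of $L$, whence by Lemma \ref{admissible} no admissible $g$ exists. $\square$
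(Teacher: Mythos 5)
Your proof is correct and follows essentially the same route as the paper: both reduce the Parseval frame condition for $\left\{ M_{l}T_{k}g\right\}$ to admissibility of $\pi$ via the factor-of-$m$ bookkeeping over the central subgroup $\left[\Gamma,\Gamma\right]$ (your normalization $f=m^{-1/2}g$ matches the paper's $g=m^{1/2}f$), and then invoke Lemma \ref{admissible} together with Proposition \ref{pi}. The one thing the paper does that you omit is to dispose separately of the case $B\in GL\left(d,\mathbb{Z}\right)$ — where $\Gamma$ is abelian, $m=1$, and Proposition \ref{pi} does not apply — by citing Section $4$ of \cite{moussa}; since the proposition as stated allows integral $B$, you should add that one line.
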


\begin{proof}
The case where $B$ is an element of $GL\left(  d,\mathbb{Z}\right)  $ is
easily derived from \cite{moussa}, Section $4$. We shall thus skip this case.
So let us assume that $B$ is a rational matrix with at least one entry not in
$\mathbb{Z}$. We have shown that the representation $\pi$ is equivalent to a
subrepresentation of the left regular representation of $L$ if and only if
$\left\vert \det B\right\vert \leq1.$ Since $\Gamma$ is a discrete group, then
its left regular representation is admissible if and only if $\left\vert \det
B\right\vert \leq1.$ Thus, the representation $\pi$ of $\Gamma$ is admissible
if and only if $\left\vert \det B\right\vert \leq1.$ Suppose that $\left\vert
\det B\right\vert \leq1.$ Then $\pi$ is admissible and there exists a vector
$f\in L^{2}\left(  \mathbb{R}^{d}\right)  $ such that the map $W_{f}^{\pi}$
defined by $W_{f}^{\pi}h\left(  e^{2\pi i\theta}M_{l}T_{k}\right)
=\left\langle h,e^{2\pi i\theta}M_{l}T_{k}f\right\rangle $ is an isometry. As
a result, for any vector $h\in L^{2}\left(  \mathbb{R}^{d}\right)  ,$ we have
\[
\left(  {\displaystyle\sum\limits_{\theta\in\left[  \Gamma,\Gamma\right]  }%
}{\displaystyle\sum\limits_{l\in B\mathbb{Z}^{d}}}\sum_{k\in\mathbb{Z}^{d}%
}\left\vert \left\langle h,e^{2\pi i\theta}M_{l}T_{k}f\right\rangle
\right\vert ^{2}\right)  ^{1/2}=\left\Vert h\right\Vert _{L^{2}\left(
\mathbb{R}^{d}\right)  }.
\]
Next, for $m=\mathrm{card}\left(  \left[  \Gamma,\Gamma\right]  \right)  ,$%
\[
{\displaystyle\sum\limits_{\theta\in\left[  \Gamma,\Gamma\right]  }%
}{\displaystyle\sum\limits_{l\in B\mathbb{Z}^{d}}}\sum_{k\in\mathbb{Z}^{d}%
}\left\vert \left\langle h,e^{2\pi i\theta}M_{l}T_{k}f\right\rangle
\right\vert ^{2}={\displaystyle\sum\limits_{l\in B\mathbb{Z}^{d}}}\sum
_{k\in\mathbb{Z}^{d}}\left\vert \left\langle h,M_{l}T_{k}\left(
m^{1/2}f\right)  \right\rangle \right\vert ^{2}.
\]
Therefore, if $g=m^{1/2}f$ then $$\left(  {\displaystyle\sum\limits_{l\in
B\mathbb{Z}^{d}}}\sum_{k\in\mathbb{Z}^{d}}\left\vert \left\langle h,M_{l}%
T_{k}g\right\rangle \right\vert ^{2}\right)  ^{1/2}=\left\Vert h\right\Vert
_{L^{2}\left(  \mathbb{R}^{d}\right) }.$$ For the converse, if we assume that
there exists a vector $g\in L^{2}\left(  \mathbb{R}^{d}\right)  $ such that
the system $$\left\{  M_{l}T_{k}g:l\in B\mathbb{Z}^{d},k\in\mathbb{Z}%
^{d}\right\} $$ is a Parseval frame in $L^{2}\left(  \mathbb{R}^{d}\right)  $
then it is easy to see that $\pi$ must be admissible. As a result, it must be
the case that $\left\vert \det B\right\vert \leq1.$
\end{proof}

\subsection{Proof of Proposition \ref{main2}}

Let us suppose that $\left\vert \det B\right\vert \leq1.$ From the proof of
Proposition \ref{pi}, we recall that there exists a unitary map $\mathfrak{A:}%
\int_{\mathbf{E}}^{\oplus}\left(  \oplus_{k=1}^{\ell\left(  \sigma\right)
}l^{2}\left(  \frac{\Gamma}{\Gamma_{0}}\right)  \right)  d\sigma\rightarrow
L^{2}\left(  \mathbb{R}^{d}\right) $ which intertwines the representations
$\int_{\mathbf{E}}^{\oplus}\left(  \oplus_{k=1}^{\ell\left(  \sigma\right)
}\mathrm{Ind}_{\Gamma_{0}}^{\Gamma}\left(  \chi_{\left(  1,\sigma\right)
}\right)  \right)  d\sigma\text{ with }\pi$ such that $\int_{\mathbf{E}%
}^{\oplus}\left(  \oplus_{k=1}^{\ell\left(  \sigma\right)  }\mathrm{Ind}%
_{\Gamma_{0}}^{\Gamma}\left(  \chi_{\left(  1,\sigma\right)  }\right)
\right)  d\sigma$ is the central decomposition of $\pi,$ and $\mathbf{E\subset
\mathbb{R}}^{d}$ is a measurable subset of a fundamental domain for the
lattice $B^{\star}\mathbb{Z}^{d}\times A^{\star}\mathbb{Z}^{d}$ and the
multiplicity function $\ell$ satisfies the condition: $\ell\left(
\sigma\right)  \leq\dim l^{2}\left(  \frac{\Gamma}{\Gamma_{0}}\right)  .$
Next, according to the discussion on Page $126,$ \cite{hartmut} the vector $a$
is an admissible vector for the representation $\tau=\int_{\mathbf{E}}%
^{\oplus}\left(  \oplus_{k=1}^{\ell\left(  \sigma\right)  }\mathrm{Ind}%
_{\Gamma_{0}}^{\Gamma}\left(  \chi_{\left(  1,\sigma\right)  }\right)
\right)  d\sigma$ if and only if $a\in\int_{\mathbf{E}}^{\oplus}\left(
\oplus_{k=1}^{\ell\left(  \sigma\right)  }l^{2}\left(  \frac{\Gamma}%
{\Gamma_{0}}\right)  \right)  d\sigma$ such that for $d\sigma$-almost every
$\sigma\in\mathbf{E},$ $\left\Vert a\left(  \sigma\right)  \left(  k\right)
\right\Vert _{l^{2}\left(  \frac{\Gamma}{\Gamma_{0}}\right)  }^{2}=1\text{ for
}1\leq k\leq\ell\left(  \sigma\right) $ and for distinct $k,j\in\left\{
1,\cdots,\ell\left(  \sigma\right)  \right\}  $ we have $\left\langle a\left(
\sigma\right)  \left(  k\right)  ,a\left(  \sigma\right)  \left(  j\right)
\right\rangle =0.$ Finally, the desired result is obtained by using the fact
that $\mathfrak{A}$ intertwines the representations $\tau$ with $\pi.$


\end{document}